\newlength\fullwidth
\numberwithin{equation}{section}
\DeclareMathSymbol{\leqslant}{\mathalpha}{AMSa}{"36} % nicer `smaller or equal' 
\DeclareMathSymbol{\geqslant}{\mathalpha}{AMSa}{"3E} % nicer `larger or equal' 
\DeclareMathSymbol{\eset}{\mathalpha}{AMSb}{"3F}     % nicer `emptyset' 
\newcommand{\maxtwo}[2]{\max_{\substack{#1 \\ #2}}} % max with 2 lines 
 \def\1{\ifmmode {1\hskip -3pt
    \rm{I}} \else {\hbox {$1\hskip -3pt \rm{I}$}}\fi}
\newcommand{\sumtwo}[2]{\sum_{\substack{#1 \\ #2}}} % sum with 2 lines 
\newcommand{\bff}{{\bf f}} 
\newcommand{\Z}{\mathbb{Z}}
\newtheorem{theorem}{Theorem}[section] 
\newtheorem{lemma}[theorem]{Lemma} 
\newtheorem{proposition}[theorem]{Proposition} 
\newtheorem{rem}[theorem]{Remark} 
\newtheorem{remark}[theorem]{Remark}
\newcommand{\N}{\mathbb N}
\newcommand{\bP}{{\bf P}}
\newcommand{\bbE}{{\ensuremath{\mathbb E}} }
\newcommand{\bbN}{{\ensuremath{\mathbb N}} } 
\newcommand{\bbP}{{\ensuremath{\mathbb P}} } 
\newcommand{\bbR}{{\ensuremath{\mathbb R}} }
\newcommand{\bbZ}{{\ensuremath{\mathbb Z}} }
\newcommand{\gep}{\varepsilon}
\newcommand{\gO}{\Omega}
\newcommand{\gd}{\delta}
\newcommand{\dd}{\mathrm{d}}
\newcommand{\gl}{\lambda}
\newcommand{\DD}{\mathcal{D}}
\newcommand{\gb}{\beta}
\newcommand{\gL}{\Lambda}
\newcommand{\gD}{\Delta}
\newcommand{\be}{\mathbf{e}}
\newcommand{\mal}{\mathcal A_L}
\author[H. Lacoin]{Hubert Lacoin}
\address{H. Lacoin, 
CEREMADE - UMR CNRS 7534 - Universit\'e Paris Dauphine,
Place du Mar\'echal de Lattre de Tassigny, 75775 CEDEX-16 Paris, France. \newline
e--mail: {\tt lacoin@ceremade.dauphine.fr}}
\begin{document}
 
\title[Scaling limit for 2D Ising droplets]{The scaling limit for zero temperature planar Ising droplets: with and without magnetic fields}

\begin{abstract}
  We consider the continuous time, zero-temperature heat-bath
  dynamics for the nearest-neighbor Ising model on $\mathbb Z^2$ with positive magnetic field.
  For a system of size $L\in \bbN$, we start with initial condition $\sigma$ such that $\sigma_x=-1$ if $x\in[-L,L]^2$ and
  $\sigma_x=+1$ and investigate the scaling limit of the set of $-$ spins when both time and space are rescaled by $L$.
  We compare the obtained result and its proof with the case of zero-magnetic fields, for which a scaling result was proved in \cite{cf:LST}. In that case,
  the time-scaling is diffusive and
  the scaling limit is given by anisotropic motion by curvature.
\end{abstract}

\maketitle

\section{Introduction}

The Ising model is one of the simplest model proposed by statistical mechanics to investigate ferromagnetic properties of metals.
It is based on the following simplification of reality: we assume that a ferromagnetic material is composed of macroscopic magnet 
that live on a lattice and can assume only two orientation, up (or $+$) and down (or $-$) called spins; the way orientation of the micromagnet are 
determined follows two rules: neighboring magnets like to have the same orientation, and all spins like to align with the external magnetic 
fields if there is one.

\medskip

The Ising model is a probabilistic model (i.e.\ a probability law on the set of possible spin orientation) following these rules and is defined 
using a Boltzman-Gibbs formalism, which is detailed in the next section. It describes the equilibrium state of a ferromagnet. 

\medskip

The Stochastic Ising model also called Glauber dynamics or heat-bath dynamics for Ising model, is a Markov chain on the set of spin configuration that 
describes the evolution of a magnet out of equilibrium 
(e.g.\ after a brutal change of temperature or of external magnetic fields). This is the object we study in this paper. Our aim is to understand
the time needed and the pattern used for a magnetic material to reach its equilibrium state after a change of external conditions.
In order to bring a more complete answer to these questions, we consider them in a simplified but still non-trivial setup that is the zero-temperature limit.
This brings heuristic intuition for what should happen in the whole low temperature regime.

\subsection{The Glauber Dynamics for the Ising Model}

Set $$\bbZ^*:=\bbZ+\frac{1}{2}:=\{x+(1/2)\ | \ x\in \bbZ\}.$$
Consider the square $$\gL=\gL_L:=[-L,L]^2\cap (\bbZ^*)^2.$$
We define the set of spin configuration on $\gL_L$ to be $\{-1,1\}^{\gL_L}$. A generic spin configuration is denoted
 by $\sigma=(\sigma_x)_{x\in\gL_L}$ and 
$\sigma_x\in \{-1,1\}$ is called the spin at site $x$.
Define the external boundary of $\gL$ as 
\begin{equation}
 \partial \gL:=\{y\in \bbZ^d \setminus \gL \ | \ \exists x \in \gL, x \sim y\}.
\end{equation}

The Ising model at inverse temperature $\gb$, with external magnetic fields $h$ and boundary condition $\eta \in \{-1,1\}^{\partial \gL_L}$
is a measure on the set of spin configurations defined by

\begin{equation}
 \mu^{\gb,h,\eta}_L(\sigma):=\frac{1}{Z^{h.\gb}_{\gL}}\exp\left(\gb\sumtwo{\{x,y\}\subset (\gL \cup \partial \gL)}{x\sim y} \sigma_x\sigma_y+h
\sum_{x\in \gL} \sigma_x\right),
\end{equation}
 where
\begin{equation}
 Z^{h,\gb}_{\gL}:=\sum_{\sigma \in \{-1,1\}^\gL} \exp\left(\gb\sumtwo{\{x,y\}\subset (\gL \cup \partial \gL)}{x\sim y} \sigma_x\sigma_y+h
\sum_{x\in \gL} \sigma_x\right),
\end{equation}
and the convention is taken that $\sigma_x:=\eta_x$ when $x\notin \gL$.
Note that the first term in the exponential makes the spins of neighboring sites more likely to agree whereas the second term underlines that configuration with 
spins aligned with 
the magnetic fields are favored.

\medskip

The heat-bath dynamic for the Ising model (at inverse temperature $\gb$, with external magnetic fields $h$ and boundary condition $\eta$)
is a Markov chain on the set of spin configuration $\{-1,1\}^\gL$. We denote the trajectory of the Markov chain by $\sigma(t)=(\sigma_x(t))_{x\in \gL}$.
One starts from a given configuration $\sigma_0$
and the rules for the evolution are the following:
\begin{itemize}
\item[(i)] Sites $x\in \gL$ are equipped with independent rate one Poisson process: 
$(\tau^x_{n})_{n\ge 0}$ where $\tau^x_{0}=0$, i.e.\ the increments $(\tau_{n,x}-\tau_{n-1,x})_{n\ge 0, x\in \bbZ^d}$ are IID exponential 
variables. 
\item[(ii)] The spin at site $x$ may change its 
value only when the clock at $x$ rings, i.e.\ at time $\tau^x_{n}$, $n\ge 1$, and its value stays constant on the
intervals $[\tau^x_n,\tau^x_{n+1})$, $n\ge 0$.
\item[(iii)] When a clock rings, the spin at $x$ is updated according to
the following law independently of the past history of the process:
\begin{itemize}
 \item $\sigma_x(t_0)=+$ with probability 
 \begin{equation}\label{turnplus}
        \frac{\exp\left(\gb\sum_{y\sim x}\sigma_y(\tau^-_{n,x})+h\right)}{2\cosh(\gb\sum_{y\sim x}\sigma_y(\tau^-_{n,x})+h)},                                   
                                          \end{equation}

 \item $\sigma_x(t_0)=-$ with probability 
 \begin{equation}\label{turnminus}
          \frac{\exp\left(-\gb\sum_{y\sim x}\sigma_y(\tau^-_{n,x})+h\right)}{2\cosh(\gb\sum_{y\sim x}\sigma_y(\tau^-_{n,x})+h)},
                                          \end{equation}

\end{itemize}
\end{itemize}

When $x$ has a neighbor $y$ in $\partial \gL$, the value $\sigma_y$ appearing in the equations \eqref{turnplus}-\eqref{turnminus}
is fixed (by convention) to be equal to $\eta_y$ for all time, where $\eta$ is the boundary condition. 
The update of the spin corresponds to sampling a spin configuration according to the measure 
$\mu^{\gb,h,\eta}_\gL(\cdot \ | \ \sigma_y= \sigma_y(\tau^-_{n,x}), \forall y\ne x)$.

\medskip

As a consequence to this last remark, $\mu^{\gb,h,\eta}_\gL$ is the unique invariant measure for the dynamics,
so that the law of $\sigma(t)$ converges to the equilibrium measure $\mu^{\gb,h,\eta}_\gL$.

\medskip

The main questions in the study of dynamics is how much time is needed to reach equilibrium 
and what is the pattern used by the chain to reach it when 
we consider the dynamics on a very large domain $\gL$ .
The answer to this question should of course depend on the temperature $\gb$, the magnetic fields $h$, the boundary condition $\eta$ 
the and the initial condition $\sigma_0$.

\medskip

In what follows, we denote often by $+$ resp.\ $-$ the spin configuration or boundary condition where all spins are $+$ resp.\ $-$.

% \medskip
% 
% Note that whereas the measure  $\mu^{\gb,h,\eta}_L$ can only be defined in finite volume (at least if one wants to keep the definition simple),
%  the dynamics can also be defined in infinite volume \textit{i.e.}\ with the finite box $\gL_L$ 
% replaced by the whole lattice $(\bbZ^*)^2$ (see \cite{cf:Liggett}).

\subsection{Conjecture and known results in the case $\gb\in(0,\infty)$, $h=0$}

Let us review shortly what is known and conjectured for these kind of dynamics when $\gb\in(0,\infty)$ and $h=0$.
The property of the dynamics depends crucially on the equilibrium property of the system.
Recall that the two dimensional Ising model undertakes a phase transition at $\gb_c=\log(1+\sqrt{2})/2$ (see the seminal work of Onsager \cite{cf:Onz})
which has the following form:
\begin{itemize}
 \item When $\gb<\gb_c$, correlation between spin at different sites decay exponentially, and for this reason 
what happens in the center of $\gL_L$ becomes independent of the
boundary condition when $L$ tends to infinity. This is called the high temperature phase. 
 \item When $\gb>\gb_c$, on the contrary, long-range correlation are present between spins, and boundary condition plays a crucial role.
In particular the measure $\mu^{\gb,h,+}_L$ and $\mu^{\gb,h,-}_L$  corresponding to $+$ and $-$ boundary condition are very different.
This is called the low temperature phase.
\end{itemize}

\medskip

In the high temperature phase, the rapid decay of correlation between distant sites makes the evolution of the system in two 
distant zones of the box $\gL_L$ almost independent. Schematically, the box of $\gL_L$ can be separated in $O(L^2)$ zones of 
finite size that come to equilibrium independently. This requires a time of order $\log L$.
This prediction has been made rigorous by Lubetzky and Sly, who proved that in that case, the mixing time (i.e.\ the time to reach equilibrium) 
for the dynamic in $\gL_L$
is equal to $\gl_{\infty}\log L(1+o(1))$ where $\gl_{\infty}$ is the relaxation time for the infinite volume dynamics (see \cite{cf:LS}).
For a formal definition of mixing time and relaxation time and an introduction to the modern theory of Markov chain, we refer to \cite{LPW}.

\medskip

In the low temperature phase the behavior of the dynamics depends on the boundary condition and for the sake of simplicity we restrict 
to the case of $+$ boundary condition. In that case, the equilibrium state is biased towards plus, and even spins in the center
have a larger probability to be $+$ than minus. In a sense, one can say that, at equilibrium, the center of the box ``knows'' what the boundary condition is. 
Thus, if one starts
e.g.\ from full $-$ initial condition ($\sigma_x(0)=-1$ for all $x\in \gL$), information must travel from the boundary to the center of the box in order to 
reach equilibrium.
For this reason the mixing time is much longer than in the high temperature phase.

\medskip

In \cite{cf:Lifshitz}, Lifshitz described a conjectural pattern used by the system with $+$ boundary condition to reach equilibrium that can be described as follows:
starting from $-$ initial condition the system should rapidly reach a state of local equilibrium that looks like the equilibrium measure
with $-$ boundary condition, (we call this the $-$ phase); then on the time-scale $L^2$, something looking like the true equilibrium measure with $+$ boundary condition, the $+$ phase,
should start to appear in the neighborhood of the cubes boundary. The interface between the $+$ and the $-$ phase should 
move on the diffusive time-scale $L^2$, having a drift in time proportional to its local curvature.
As a consequence the system should reach equilibrium when 
the bubble formed by the $-$ phase disappears macroscopically, \textit{i.e.} in a time $O(L^2)$.

\medskip

For finite $\gb>\gb_c$ this conjecture is far from being on rigorous mathematical ground, but Lifshitz ideas have been used to get bounds 
on the mixing time. The best to date being by Lubetzky \textit{et al.} \cite{cf:LSMT} saying that the system reaches equilibrium in a time
$L^{\log L}$, still far from the conjecture $L^2$. This gap between the Lifshitz conjecture and the rigorous mathematical result has been 
one of the incentives to study the simpler 
zero-temperature version of the model.

\medskip

Dynamics with different boundary condition or non-zero magnetic fields at low temperature also exhibits interesting behavior 
like low-temperature induced metastability  (see e.g.\ \cite{cf:Scho}) that we choose not to expose here. 
Note also that results are available at the critical temperature $\gb_c$ 
\cite{cf:SL2}, where the equilibrium state of the system is somehow harder to describe.

\section{Zero-temperature dynamics}

Given $h\ge 0$, we look at the limiting dynamics when $\gb$ tends to infinity.
We call this the zero-temperature limit (recall that $\gb$ is the inverse of the temperature).
In what follows we will consider only $+$ boundary condition: $\eta_x=+1, \ \forall x \in \partial \gL$.

\medskip

With this setup, on a finite box with $+$ boundary condition, the limit of the Ising measure $\lim_{\gb\to \infty}\mu_L^{\gb,h,+}$ 
is just the Dirac measure on the full $+$ configuration: $\sigma_x=+1,\ \forall x\in \gL_L$.
The limiting dynamics is non-trivial and can be described as follows:
the value the spin at $x$ is updated with rate one as before; when a spin is updated
it takes the value of the majority of its neighbors if it is well defined
and take value $\pm 1$ with probability $e^{\pm h}/(2\cosh(h))$ if it has the same number of $+$ and $-$ neighbors.
The process can also be defined when $h=\infty$, it corresponds to the case where both $\gb$ and $h$ 
tend to infinity with $h\ll \gb$.

\medskip

The question concerning the pattern used to reach the equilibrium then takes the following form: 
starting from a finite domain of $\bbZ^2$ filled with $-$ spins what is the time needed to
reach the whole $+$ configuration and what is the pattern used to reach it. 

\medskip
More precisely:
We consider a compact, simply connected subset $\mathcal D\subset
[-1,1]^2$ whose boundary is a closed smooth curve.  Given $L\in \N$ we
consider the Markov chain described above with  initial 
condition
\begin{equation}\label{bien}
\sigma_x(0)=\begin{cases} - 1 \quad &\text{ if } x\in (\bbZ^*)^2\cap L\mathcal D,
                          \\ +1 \quad &\text{ otherwise}.
            \end{cases}
 \end{equation}
In order to see a set of "$-$" spins as a subset of $\bbR^2$, each
vertex $x\in (\bbZ^*)^2$ may be identified with the closed square of
side-length one centered at $x$,
\begin{equation}
\mathcal C_x:= x+[-1/2,1/2]^2.
\end{equation}
One defines 
\begin{equation}\label{mal}
\mal(t):=\bigcup_{\{x: \ \sigma_x(t)=-1\}} \mathcal C_x,
\end{equation}
which is the ``$-$ droplet'' at time $t$ for the dynamics. Note that the
boundary of $\mal(t)$ is a union of edges of $\Z^2$ (and this is the
reason why we defined the Ising model on  $(\Z^*)^2$ rather than on $\Z^2$).

\medskip

We investigate the scaling limit of $\mal(t)$ and the time needed for it to vanish. 
The nature of the scaling limit depends really much on the value of the external magnetic fields $h$:
\begin{itemize}
 \item If $h>0$, the interface between $+$ and $-$ is always pushed towards the minus region, at linear speed, and
thus macroscopic motion is visible on time-scale $L$.  
 \item If $h=0$, there is no sign that is favored, and the interface will be pushed in the direction of its curvature to
reduce its length. This is visible only on the diffusive time scale.
\end{itemize}

The main aims of this paper are: to review in detail the recent proof (with co-authors \cite{cf:LST}) that the scaling 
limit of  $\mal(t)$ is given by an anisotropic curve shortening flow, and to give a description of the scaling limit 
in the case $h>0$ (for the sake of simplicity we will limit our proof to $h=\infty$).

\subsection{The case of zero magnetic fields}
                            
Let us focus first on the case of zero-magnetic fields, and make more precise the Lifshitz conjecture \cite{cf:Lifshitz} 
concerning low temperature dynamics in this case.

On heuristic
grounds, Lifshitz predicted that the boundary of $\mal(t)$ should
follow an anisotropic curve shortening  motion: after rescaling space
by $L$ and time by $L^2$ and letting $L$ tend to infinity, the motion
of the interface between $\mal(t)$ and its complement (\textit{i.e.} between $+$ and $-$ spins) should be
deterministic and the local drift of the interface should be
proportional to the curvature. An anisotropic factor should appear in front of the curvature to
reflect anisotropy of the lattice.  Spohn \cite{cf:Spohn} made this conjecture more precise and brought some elements for its proof: let
$\gamma(t,L)$ denote the boundary of the (random) set $(1/L)\mal(L^2
t)$. Then, for $L\to\infty$, the flow of curve $(\gamma(t,L))_{t\ge 0}$ should converge to a
deterministic flow $(\gamma(t))_{t\ge0}$ and the motion of the limiting curve
 should be such that the normal velocity at a
point $x\in\gamma(t)$ is given by the curvature at $x$ multiplied an anisotropic
factor $a(\theta_x)$, where $\theta_x$ is the angle made by the outward
directed normal to $\gamma(t)$ at $x$ together the horizontal axis
 (see Figure \ref{fig:suppfunc}). The velocity points in the direction of concavity.
The function $a(\cdot)$ has the explicit expression
\begin{equation}
  \label{eq:a}
  a(\theta):=\frac1{2(|\cos(\theta)|+|\sin(\theta)|)^2}.
\end{equation}
The area enclosed by the curve decreases with a constant speed:  $-\int_{0}^{2\pi} a(\theta)\dd \theta$.
In particular, the curve $\gamma(t)$ shrinks to a point in a finite
time 
\[
t_0=\frac{Area(\mathcal D)}{\int_0^{2\pi} a(\theta) \dd\theta}=
\frac{Area(\mathcal D)}2.
\]

Note that the function $a(\cdot)$ is symmetric around $0$ and is periodic
with period $\pi/2$. This is inherited from the discrete symmetries of the lattice
$(\bbZ^*)^2$.

 \begin{figure}[hlt]
\leavevmode
\epsfysize =5.8 cm
\psfragscanon 
\psfrag{theta}{\small $\theta$}
\psfrag{O}{\small $\bf{0}$}
\psfrag{x}{\small $x(\theta)$}
\psfrag{N}{}
\psfrag{k}{\small $k(\theta)$}
\psfrag{V}{\small $v(\theta)$}
\psfrag{h}{\small $h(\theta)$}
\epsfbox{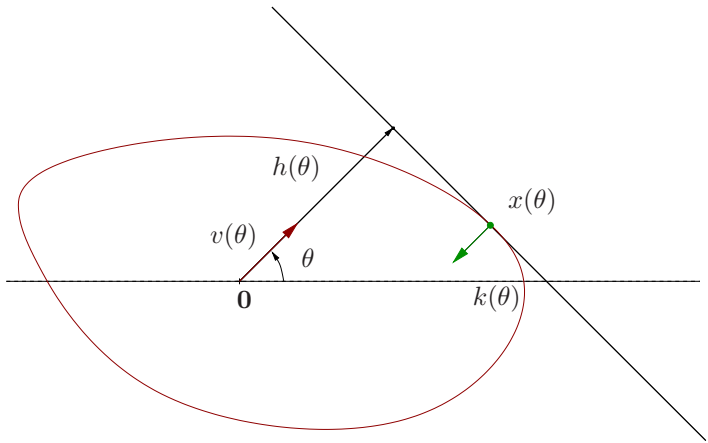}
\begin{center}
\caption{
 A graphic description of the support function $h$.
Given $\theta$, consider the point $x(\theta)$ of $\gamma$ that maximizes 
$x\cdot v(\theta)$ (it is unique if the curve is strictly convex).
Then $h(\theta)=x(\theta)\cdot v(\theta)$, and $k(\theta)$ is the norm of the curvature vector of $\gamma$ (green vector)
 at  $x(\theta)$. If the tangent to $\gamma$ at $x$ exists it is normal to $v(\theta)$ and $|h(\theta)|$ 
is the distance between the tangent and the origin.}
\label{fig:suppfunc}
\end{center}
\end{figure}

In a recent work \cite{cf:LST}, with co-authors, this conjecture was brought on rigorous ground in the case when the initial condition 
$\mathcal D$ is convex.
Let us first give a rigorous definition of motion by curvature and a result concerning its existence:
given a convex smooth closed curve $\gamma=\partial \DD$ in $\bbR^2$, we parameterize it
following a standard convention of convex geometry
(cf. e.g. \cite{GL2} and Figure \ref{fig:suppfunc}). 
For $\theta\in[0,2\pi]$ let ${ v}(\theta)$ be the unit vector forming an anticlockwise angle
$\theta$ with the horizontal axis
and let
\begin{equation}\label{support}
h(\theta)=\sup\{x\cdot v(\theta), x\in\gamma\},
\end{equation}
where $\cdot$ denotes the usual scalar product in $\bbR^2$.
The function $\theta\mapsto h(\theta)$ (called ``the support function'') 
uniquely determines $\gamma$:
\begin{equation}
  \DD=\bigcap_{0\le \theta\le 2\pi}\{x\in\bbR^2:x\cdot v(\theta)\le h(\theta)\}.
\end{equation}
With this parameterization, the anisotropic curve shortening evolution reads
\begin{equation}
  \label{eq:meanc}
  \begin{cases}
\partial_t h(\theta,t)=- a(\theta)k(\theta,t),      \\
h(\theta,0)=h(\theta),
  \end{cases}
\end{equation}
where, for a convex curve $\gamma$,
$k(\theta)\ge0$ is the curvature at the point $x(\theta)\in\gamma$ 
where the outward normal vector makes  an angle
$\theta$ with the horizontal axis and
the time derivative is taken at constant $\theta$ (see \cite[Lemma 2.1]{GL2} for a proof that \eqref{eq:meanc}
is equivalent to the standard definition of anisotropic motion by curvature). 

\medskip

Existence of a solution to \label{eq:mean} is not straight-forward. It was proved under the assumption that $a$ is $C^2$ in \cite{GL2}.
The function $a$ given by equation \eqref{eq:a} is only Lifshitz due to singularity at $\theta=i\pi/2$, $i=1,\dots,4$ but a proof of existence and uniqueness of 
the motion was given in \cite{cf:LST} for that particular case.

\begin{theorem}[{\cite[Theorem 2.1]{cf:LST}}]
\label{th:deterministico}

Let $\mathcal D\subset[-1,1]^2$ be strictly convex and assume that its boundary
$\gamma=\partial \mathcal D$ is a curve whose curvature $[0,2\pi]\ni
\theta\mapsto k(\theta)$ defines a positive, $2\pi$-periodic,
Lipschitz function.

\medskip

 Then there exists a unique flow of convex curves
$(\gamma(t))_t$ with curvature defined everywhere, such that $\gamma(0)=\gamma$ and that the corresponding
support function $h(\theta,t)$ solves \eqref{eq:meanc} for $t\ge0$ and
satisfies the correct initial condition $h(\theta,0)=h(\theta)$.

\medskip

The
curve $\gamma(t)$ shrinks to a point ${\bf x}_f\in\bbR^2$ at time
$t_f=Area(\mathcal D)/2$.

\medskip

For $t<t_f$, $\gamma(t)$ is a smooth curve
in the following sense: its curvature function $k(\cdot,t)$ is
Lipschitz and bounded away from $0$ and infinity on any compact subset
of $[0,t_f)$.
\end{theorem}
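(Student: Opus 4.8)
The plan is to construct the flow $(\gamma(t))_t$ by approximating the Lipschitz anisotropy $a$ by smooth anisotropies $a_\e$ (mollifying away the corners at $\theta = i\pi/2$ while keeping $a_\e$ bounded above and below by positive constants uniformly in $\e$), invoking the $C^2$ existence theory of \cite{GL2} for each $a_\e$, and then passing to the limit $\e \to 0$. For each $\e$ we get a flow of convex curves $\gamma_\e(t)$ with support function $h_\e(\theta,t)$ solving $\partial_t h_\e = -a_\e(\theta) k_\e(\theta,t)$. The three quantities to control uniformly in $\e$ are: the time of existence (i.e.\ the extinction time), the curvature from above, and the curvature from below (equivalently the support function and its second angular derivative, since $k = h + \partial_\theta^2 h$ in this parametrization).

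First I would record the a priori bounds. Differentiating the evolution, one finds that $\max_\theta k_\e(\cdot,t)$ and $\min_\theta k_\e(\cdot,t)$ satisfy differential inequalities of the form $\tfrac{d}{dt}\max k_\e \le C (\max k_\e)^2$ and a matching lower bound, with $C$ depending only on $\|a_\e\|_\infty$ and a lower bound on $a_\e$; the Lipschitz constant of $a$ enters a Gronwall estimate for the Lipschitz norm of $k_\e(\cdot,t)$ in $\theta$. These give uniform control of $k_\e$ away from $0$ and $\infty$ on compact subsets of $[0,t_f)$, where $t_f$ is identified via the area: since $\tfrac{d}{dt}\operatorname{Area}_\e = -\int_0^{2\pi} a_\e(\theta)\,\dd\theta = -\int_0^{2\pi} a(\theta)\,\dd\theta + o(1) = -2 + o(1)$, the enclosed area decreases at (almost) constant speed, so $\gamma_\e(t)$ survives up to time $\approx \operatorname{Area}(\mathcal D)/2$ and shrinks to a point there. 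With these uniform bounds, $\{h_\e\}$ is equi-Lipschitz and equi-$C^2$ in $\theta$, and equi-Lipschitz in $t$, on $[0,2\pi]\times[0,t_f-\delta]$, so Arzelà–Ascoli extracts a subsequential limit $h(\theta,t)$ which is $C^1$ in $\theta$ with Lipschitz second derivative; passing to the limit in the (integrated) equation using $a_\e \to a$ uniformly shows $h$ solves \eqref{eq:meanc} with the correct initial condition, and the limiting curvature $k = h + \partial_\theta^2 h$ inherits the two-sided bounds and Lipschitz regularity. Letting $\delta\to0$ and checking that the curve degenerates to a point ${\bf x}_f$ at $t_f = \operatorname{Area}(\mathcal D)/2$ completes existence and the stated regularity.

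For uniqueness I would argue directly at the level of \eqref{eq:meanc}: if $h^{(1)}, h^{(2)}$ are two solutions with the stated regularity, set $w = h^{(1)} - h^{(2)}$, so $\partial_t w = -a(\theta)(k^{(1)} - k^{(2)}) = -a(\theta)(w + \partial_\theta^2 w)$. This is a linear parabolic equation for $w$ on the circle with bounded, uniformly positive (though only Lipschitz) coefficient $a$, and zero initial data; a maximum-principle / energy argument (e.g.\ estimating $\tfrac{d}{dt}\int w^2\,\dd\theta$ after integration by parts, using the two-sided curvature bounds to control the lower-order term) forces $w \equiv 0$ on $[0, t_f - \delta]$ for every $\delta$. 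Some care is needed because $a$ is not differentiable, so integration by parts in $\theta$ must be organized to avoid derivatives of $a$ — one can instead test against $a^{-1}\partial_t w$ or work with the weighted energy $\int a^{-1} w^2$.

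The main obstacle is the low regularity of $a$: the corners of the Wulff-type anisotropy at $\theta = i\pi/2$ mean the classical parabolic theory of \cite{GL2} does not apply directly, and one must ensure that \emph{all} a priori estimates — especially the two-sided curvature bound and the Lipschitz-in-$\theta$ bound on $k_\e$ — are uniform in the mollification parameter, depending only on $\|a\|_\infty$, $\inf a > 0$, and $\operatorname{Lip}(a)$ rather than on $\|a_\e\|_{C^2}$. Establishing that the differential inequalities for $\max k_\e$ and $\min k_\e$ do not see $\|a_\e''\|_\infty$ (only lower-order norms of $a_\e$), and that the Gronwall bound for the spatial Lipschitz norm of $k_\e$ closes, is the crux; once that is in hand, the compactness and the uniqueness argument are comparatively routine.
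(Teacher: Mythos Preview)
This paper does not contain a proof of Theorem~\ref{th:deterministico}: the result is quoted from the companion paper \cite{cf:LST}, and the present text only records the statement. So there is no ``paper's own proof'' here to compare your proposal against.

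On the merits of your proposal itself, the mollify-and-pass-to-the-limit strategy (smooth $a$ to $a_\e$, invoke \cite{GL2}, extract uniform bounds, take $\e\to0$) is the natural route and is plausibly what \cite{cf:LST} does. Two concrete issues, though:

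\medskip

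\textbf{(i) A formula error that propagates.} For a convex curve with support function $h$, the \emph{radius of curvature} is $\rho=h+\partial_\theta^2 h$, so $k=(h+\partial_\theta^2 h)^{-1}$, not $k=h+\partial_\theta^2 h$ as you wrote. This matters in your uniqueness argument: with $w=h^{(1)}-h^{(2)}$ one has
\[
\partial_t w=-a(\theta)\bigl(k^{(1)}-k^{(2)}\bigr)=\frac{a(\theta)}{\rho^{(1)}\rho^{(2)}}\,\bigl(w+\partial_\theta^2 w\bigr),
\]
not $-a(\theta)(w+\partial_\theta^2 w)$. The corrected equation is still linear uniformly parabolic on the circle (using the two-sided curvature bounds), so the maximum-principle/energy argument can be salvaged, but as written it is wrong.

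\medskip

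\textbf{(ii) The crux you flag is genuine and you have not resolved it.} The curvature evolves by
\[
\partial_t k = k^2\bigl(a k + \partial_\theta^2(ak)\bigr)
= k^2\bigl(a k + a''k + 2a'k_\theta + a k_{\theta\theta}\bigr),
\]
so at an interior maximum of $k$ (where $k_\theta=0$, $k_{\theta\theta}\le 0$) the right-hand side still contains $a'' k^3$. Thus the naive differential inequality for $\max_\theta k_\e$ \emph{does} see $\|a_\e''\|_\infty$, contrary to what you hope. Getting bounds that depend only on $\|a\|_\infty$, $\inf a$, and $\mathrm{Lip}(a)$ requires an extra idea --- e.g.\ working with a well-chosen combination such as $a k$ or exploiting the piecewise-smooth structure of this specific $a$ on each quadrant $(i\pi/2,(i{+}1)\pi/2)$ --- and you have identified the difficulty without supplying that idea. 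This is the real gap in the proposal.
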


Now let $\mathcal D(t)$ denotes the flow of convex shape whose support function is solution of \eqref{eq:meanc}, with $\mathcal D(0)=\mathcal D$.
Let $B(x,r)$ denote the closed Euclidean ball of radius $r>0$ and center $x\in \bbR^2$. For a closed set $\mathcal C\subset \bbR^2$, define the inner
and outer $\delta$-neighborhood of $\mathcal C$ to be
\begin{equation}\label{lesvoisins}\begin{split}
 \mathcal C^{(\delta)}&:=\bigcup_{x\in \mathcal C}B(x,\delta),\\
 \mathcal C^{(-\delta)}&:=\left(\bigcup_{x\in \mathcal C^c}B(x,\delta)\right)^c.
\end{split}
\end{equation}

We say that an event, or rather, that a sequence of events $(A_n)_{n\ge 0}$ holds {\bf with high probability} of w.h.p.\ if the probability of $A_n$ tends to one.
Then $\mathcal D(t)$ is the scaling limit $\mal(t)$ in the following sense.

\begin{theorem}[{\cite[Theorem 2.2]{cf:LST}}]
\label{th:convex}
Consider the dynamics starting with initial condition given by \eqref{bien}, where $\mathcal D$ is a convex shape satisfying the assumption of 
Theorem \ref{th:deterministico}.
For any $\delta>0$ one has w.h.p.
\begin{gather}\label{eq:scaling}
  \mathcal D^{(-\delta)}(t)\subset \frac1L \mal( L^2 t)\subset  \mathcal D^{(\delta)}(t)
\quad\quad \text{for every}\quad 0\le t\le t_f+\delta\\
\mal(L^2 t)=\emptyset \quad\quad \text{for every}\quad t>t_f+\delta.
\end{gather}
In particular,
 one has the following convergence in probability:
\begin{equation}
\label{eq:drif}
  \lim_{L\to\infty} \frac{\tau_+}{L^2 Area( \mathcal D)}=\frac{1}{2}.
\end{equation}
\end{theorem}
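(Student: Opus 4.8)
\emph{Monotonicity and reduction.} The proof rests on three ingredients: monotonicity of the dynamics, a mesoscopic hydrodynamic estimate producing the anisotropy factor $a(\theta)$, and a Gr\"onwall-type iteration in time controlled by the regularity of the limiting flow of Theorem~\ref{th:deterministico}. Realise the dynamics through a graphical construction --- one family of rate-one Poisson clocks on $\gL$ together with independent uniform marks resolving the $2$--$2$ ties --- under which, the majority update rule with $+$ boundary condition being monotone, two copies with nested sets of $-$ spins remain nested forever; hence if the initial droplet lies between $L\mathcal D_1$ and $L\mathcal D_2$ with $\mathcal D_1\subseteq\mathcal D_2$, so does $\mal(t)$ for every $t$. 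Combining this with the fact, from Theorem~\ref{th:deterministico}, that on every compact subinterval of $[0,t_f)$ the curves $\partial\mathcal D(t)$ have curvature bounded above and away from zero, it suffices to prove: for every $t_*<t_f$ and every $\delta>0$, with high probability $\mathcal D^{(-\delta)}(t)\subseteq\frac1L\mal(L^2t)\subseteq\mathcal D^{(\delta)}(t)$ for all $t\le t_*$. Granting this, fix $\delta$ and choose $t_*$ close enough to $t_f$ that $\mathcal D(t_*)$ has small diameter; the sandwich at $t_*$ then confines the droplet to a ball of macroscopic radius $r(\delta)\to0$, and standard bounds on the absorption time of a bounded droplet (one of diameter $D$ reaches the all-$+$ state within microscopic time $O(D^2)$ with high probability) give $\mal(L^2t)=\emptyset$ for all $t>t_f+\delta$. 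Together with the sandwich on $[0,t_*]$ this is \eqref{eq:scaling}, and \eqref{eq:drif} follows at once, since the inclusions pin $\tau_+/L^2$ between $t_f-o(1)$ and $t_f+o(1)$ and $t_f=Area(\mathcal D)/2$.

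\emph{Mesoscopic building block.} Fix a suitable mesoscopic length $1\ll\ell\ll L$ and a mesoscopic time step $\vartheta$ with $\ell^2\ll\vartheta L^2$ (so the local particle system equilibrates) and $\vartheta L\ll\ell$ (so the interface does not leave a given window during one step); both are possible since $\ell\ll L$. In a window of side $O(\ell)$ where $\partial\mal$ is, up to the mesoscopic scale, a monotone lattice path of fixed mean slope --- a staircase whose up/right steps have the density fixed by the outer normal angle $\theta$ --- the interface dynamics reduces to the symmetric simple exclusion process with rate $\tfrac12$, the steps of the staircase playing the role of particles and the frozen $3$--$1$ edges enforcing exclusion. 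Its hydrodynamic limit is the linear heat equation, and passing back to Euclidean coordinates through the change of variables between $\ell^1$- and $\ell^2$-arclength along the interface produces exactly the mobility $a(\theta)$ of \eqref{eq:a}. The upshot is that over microscopic time $\vartheta L^2$ the interface in the window is displaced in the inward normal direction by $(a(\theta)\,k+o(1))\,\vartheta$ in macroscopic units, $k$ being the mesoscopically constant curvature. The four lattice directions, where $a$ is only Lipschitz and a perfectly flat interface is frozen, are delicate: one needs a quantitative lower bound on the erosion speed of a slightly tilted, slightly curved interface --- supplied by the curvature lower bound along the limiting flow --- to rule out spurious macroscopic facets.

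\emph{Iteration in time.} Partition $[0,t_*]$ into steps of length $\vartheta$ and assume inductively that at time $t_j$, with high probability, $L\mathcal D^{(-a_j)}(t_j)\subseteq\mal(t_jL^2)\subseteq L\mathcal D^{(a_j)}(t_j)$. For the outer inclusion one step later: by monotonicity it suffices to run the dynamics from the full convex body $L\mathcal D^{(a_j)}(t_j)$ for time $\vartheta L^2$; covering its boundary by $O(L/\ell)$ mesoscopic windows, applying the building block in each (a union bound over windows here requires sufficiently strong tail estimates), and gluing shows the droplet is then contained in $L$ times one explicit-Euler step of the anisotropic flow applied to $\mathcal D^{(a_j)}(t_j)$, up to an $o(\vartheta)$ error. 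A comparison principle and a Gr\"onwall estimate for the flow itself --- using the uniform curvature bounds on $[0,t_*]$ and the Euler discretisation error from Theorem~\ref{th:deterministico} --- then place this evolved body inside $\mathcal D^{(a_{j+1})}(t_{j+1})$, with $a_{j+1}\le(1+C\vartheta)a_j+C\vartheta^2+\rho(L)$ and $\rho(L)\to0$. The inner inclusion is proved symmetrically, running the dynamics from $L\mathcal D^{(-a_j)}(t_j)$. Summing the recursion over the $O(t_*/\vartheta)$ steps, the $\vartheta$-terms vanish as $\vartheta\to0$ and the $\rho(L)$-terms as $L\to\infty$ (provided $\rho$ is small compared to $\vartheta$, i.e.\ the mesoscopic estimate comes with a rate), so $a_j<\delta$ for every $j$ once $\vartheta$ is small and $L$ large --- the reduced claim.

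\emph{Main obstacle.} The heart of the matter is the mesoscopic building block together with its uniform use: one must establish the hydrodynamic behaviour of the interface on scale $\ell$ with error and probability bounds strong enough to survive the union bound over all $O(L/\ell)$ windows and all the time steps, uniformly over the whole range of local slopes, and in particular near the four lattice directions, where $a$ loses smoothness and the exclusion picture degenerates (few particles per window). Propagating the curvature lower bound of the limiting flow of Theorem~\ref{th:deterministico} down to the microscopic model --- so that neither spurious facets nor accumulated fluctuations spoil the iteration --- is where essentially all the work lies.
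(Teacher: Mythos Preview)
Your overall architecture --- monotonicity, an infinitesimal step, and iteration --- matches the paper's. But there is a genuine gap in your mesoscopic building block, and the paper's actual proof differs from yours in two linked ways.

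\emph{The poles are not a degenerate case of SSEP.} You write that in every window the interface is ``a monotone lattice path of fixed mean slope'' and that the dynamics ``reduces to the symmetric simple exclusion process''. This is only true away from the four poles. Near a pole (say the point of maximal $x_2$-coordinate) the interface, viewed in the frame $(\bff_1,\bff_2)$, first goes up and then down: it is \emph{not} monotone, so the SSEP bijection breaks. Worse, the true dynamics in such a window can disconnect the interface into two components (a $+$ spin can appear with three $+$ neighbours below a narrow neck of $-$ spins), so there is no height-function description at all without modification. Saying the exclusion picture merely ``degenerates (few particles per window)'' understates the issue: it is the wrong particle system there. The paper handles the poles by switching to the frame $(\be_1,\be_2)$, introducing an auxiliary dynamics that forbids disconnections, and identifying the gradient of the interface with a \emph{symmetric zero-range process} with two particle types that annihilate on contact. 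Its hydrodynamic limit is $\partial_t w=\partial_x^2 w/(1+|\partial_x w|^2)$, which is what produces the correct normal speed $a(\theta)k$ near the lattice directions. Without this second ingredient (Theorem~\ref{th:spooney} in the paper) your scheme has no mechanism to show the poles move at all, let alone at the right speed.

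\emph{Macroscopic versus mesoscopic decomposition.} The paper does not tile the boundary by $O(L/\ell)$ mesoscopic windows. It splits $\mathcal D\setminus\mathcal D(\gep(1-\delta))$ into exactly eight macroscopic regions --- four small ones $A_i$ around the poles and four large ones $B_i$ between them --- and in each compares the true dynamics, via a short chain of monotone modifications, to a single interface dynamics on a rectangle of side $\Theta(L)$. The SSEP scaling limit (Theorem~\ref{mickey}) controls the $B_i$, the zero-range scaling limit (Theorem~\ref{th:spooney}) controls the $A_i$, and a first-order Taylor expansion of both the heat equation and the curve-shortening flow matches them. The time step $\gep$ is a fixed small number, so the iteration has $O(\gep^{-1})$ steps, all independent of $L$; no union bound over a number of events growing with $L$ is needed, and no quantitative tail estimate on the hydrodynamic convergence is required. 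Your scheme, by contrast, would need the mesoscopic estimate to come with error probabilities summable over $O(L/\ell)\times O(1/\vartheta)$ events --- a substantially stronger input than what the cited hydrodynamic results provide.
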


We can mention previous related result in the literature: in \cite{cf:CSS}, the authors consider a simplified dynamics
that do not allow the interface to break in several components, for this dynamics, they present a result  similar to 
\eqref{eq:drif} without any statement concerning the limiting shape.

\medskip

In \cite{cf:CL} the drift for the interface at the initial time is studied, and the author prove that it is proportional to the curvature
multiplied by an anisotropy function that is different from $a(\theta)$ of equation \eqref{eq:a}.
This difference is explained by the fact that the initial condition considered by the authors \eqref{bien} is very far from being a 
local equilibrium for the interface dynamics.

\begin{remark}\rm
Our result concerns only the case where $\mathcal D$ is a convex shape and there are serious reason for it.
The first one is that there is no proof of the existence of anisotropic curvature motion starting from non-convex initial condition
(see \cite{Grayson} in the isotropic case), and its regularity.  
On the probabilistic side, our method seem to be able to handle the non-convex case, but would need considerable improvements. 
\end{remark}

We can now compare this result with what happens in the case of positive magnetic fields.
% 
% 
%  It is important to note for the following that $a(\cdot)$
% is $C^\infty$ except at $\theta=j\pi/2, j=0,\dots,3$ where it is only 
% continuous and its first derivative has a jump: indeed,
% $a(\theta)\sim 1/2-|\theta-i\pi/2|$ for $\theta$ close to $i\pi/2,
% i=0,\dots,3$. 
% 
% 

\subsection{Zero-temperature dynamics with positive magnetic fields ($h=\infty$)}

Consider the dynamics where sites are still updated with rate $1$, but with the following rule of update: when  a site is updated,
its  spin flips to $+$ if it has two or more $+$ neighbors and to $-$ if has three or more $-$ neighbors.

\medskip

In that case, the right time-scale to describe the evolution of $\mal$ is not $L^2$ but $L$ and the interface is always 
going towards the $-$ side regardless of the curvature. The main new result of this paper is identifying the scaling limit in this case.

\medskip

Heuristically, the intensity of the drift of the interface can be deduced from mathematical work on Totally Asymmetric Simple Exclusion Process (TASEP),
more precisely from results giving the scaling limit of the height function (see Section \ref{prtsys} for detailed explanations). 
The drift at a point where the interface makes an angle $\theta$ with the horizontal axis is equal to
\begin{equation}\label{beteta}
 b(\theta):= \frac{|\sin(2\theta)|(|\cos\theta|+|\sin \theta |)}{1+|\sin 2\theta|}.
\end{equation}

\medskip

Let us give a rigorous definition of this shape evolution in the case of convex initial condition.
The shape remains convex at all times and one can describe the evolution of the interface in terms of the support function (recall \eqref{support}) as follows:
\begin{equation}\label{pmc}
\begin{cases} \partial_t h(\theta,t)=- b(\theta),\\
 h(\theta,0)=h(\theta).
\end{cases}
\end{equation}
The problem is that the equation \eqref{pmc} is not well-posed. 
However there is a notion of weak solution to \eqref{pmc} that has a rather simple description and
this is the one that will be of interest to us: given an initial condition $\mathcal D$, define
\begin{equation}\label{mosdef}
\mathcal D(t):=\bigcap_{\theta\in[0,2\pi]}\{x\in\bbR^2:x\cdot v(\theta)\le h(\theta)-b(\theta)t\}.
\end{equation}
It could be shown that the support function of $(\mathcal D(t))_{t\ge 0}$ is the unique viscosity solution of \eqref{pmc} (see \cite{cf:DL} for an introduction 
to this concept) but we will 
not pain ourselves with such considerations, and rather consider \eqref{mosdef} as a definition.

\medskip
Our main result is

\begin{theorem}\label{resap}
The process
$(\frac{1}{L}\mal(Lt))_{t\ge 0}$ converges to $(\mathcal D(t))_{t\ge 0}$ in probability, 
in the topology of time-uniform convergence for the Hausdorf metric. This is to say,  for any $\delta>0$
with probability tending to one,
\begin{equation}
\mathcal D^{(-\delta)}(t)\subset (\frac{1}{L}\mal(Lt))_{t\ge 0}\subset  \mathcal D^{(\delta)}(t).
\end{equation}
\end{theorem}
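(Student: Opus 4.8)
The plan is to prove the two inclusions in \eqref{eq:scaling}-type fashion by a combination of monotonicity (attractiveness) of the dynamics, a connection to corner-growth / TASEP asymptotics to control a single ``nice'' piece of interface, and a covering argument that patches local estimates into a global Hausdorff bound that is uniform in time on the relevant $O(1)$ interval. The key structural fact to exploit is that the $h=\infty$ dynamics is monotone: if $\sigma(0)\le \sigma'(0)$ pointwise then the two evolutions can be coupled so that $\sigma(t)\le\sigma'(t)$ for all $t$, hence $\mathcal A_L(t)\supseteq \mathcal A'_L(t)$. This lets me sandwich the true droplet between droplets started from polygonal (in fact, finite-intersection-of-half-planes) approximations of $L\mathcal D$ from inside and outside, for which the boundary consists of finitely many long straight segments whose growth I can analyze one at a time.

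First I would establish the \emph{local} drift law: for a droplet whose boundary near the origin is, at macroscopic scale, a straight line making angle $\theta$ with the horizontal axis, the interface recedes (into the $-$ region) at normal speed $b(\theta)$ given by \eqref{beteta}. The interface separating $+$ from $-$ under this majority-vote rule is exactly a corner-growth / last-passage / TASEP height function: flipping a site to $+$ requires two $+$ neighbors, which in the ``staircase'' coordinates of the interface is precisely the TASEP jump rule, so the hydrodynamic limit of the height function (the classical Rost-type result, as flagged in Section~\ref{prtsys}) gives the deterministic profile, and a short computation in rotated coordinates converts the TASEP limit shape into the normal velocity $b(\theta)$. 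I would state this as a lemma: started from a half-plane (or wedge) of $-$ spins, $\frac1L \mathcal A_L(Lt)$ converges to the correspondingly translated half-plane, with explicit fluctuation/error control of size $o(L)$ (even $O(L^{1/2}\log L)$ via standard LPP concentration), valid uniformly on compact time intervals.

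Next I would combine these local estimates. Fix $\delta>0$. Approximate $\mathcal D$ from outside by a finite intersection $\mathcal D^+ \supseteq \mathcal D$ of half-planes whose supporting lines are tangent to $\partial\mathcal D$ at finitely many angles $\theta_1,\dots,\theta_N$ chosen $\delta$-dense, and similarly from inside by $\mathcal D^-\subseteq \mathcal D$; by continuity of the map $\mathcal D \mapsto \mathcal D(t)$ (which follows from the explicit formula \eqref{mosdef}, since translating each supporting line by $-b(\theta_i)t$ is continuous and the finitely many angles suffice to $\delta$-control the resulting body uniformly for $t\in[0,t_f+\delta]$) we may assume $\mathcal D^-(t)\subset \mathcal D^{(-\delta/2)}(t)$ and $\mathcal D^+(t)\subset \mathcal D^{(\delta/2)}(t)$ is false—rather, $\mathcal D(t)\subset \mathcal D^+(t)\subset \mathcal D(t)^{(\delta/2)}$ for all relevant $t$, and dually for $\mathcal D^-$. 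For the outer bound: each half-plane piece of $\partial(L\mathcal D^+)$, evolving freely, stays (w.h.p., by the local lemma applied with a union bound over the $N$ pieces) within $\delta L/2$ of the deterministically-evolved line $\{x\cdot v(\theta_i) \le h^+(\theta_i) - b(\theta_i)t\}$; monotonicity then forces $\mathcal A_L(Lt) \subset L\,\mathcal D^+(t)^{(\delta/2)} \subset L\,\mathcal D^{(\delta)}(t)$. The inner bound is symmetric, using $\mathcal D^-$ and the reverse inclusion, together with the fact that once a region is entirely $+$ it stays $+$ forever (so the $+$ region only grows), which prevents ``refilling'' of the $-$ droplet. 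The time-uniformity comes for free because the local LPP estimates hold uniformly on $[0,t_f+\delta]$ and there are only finitely many pieces.

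\textbf{Main obstacle.} The delicate point is not the hydrodynamic input but the \emph{patching near corners and along the whole boundary simultaneously}: a single straight segment of $\partial(L\mathcal D^\pm)$ is not really evolving as a free half-plane, because its two ends are pinned by neighboring segments meeting at obtuse/reflex angles, and the majority rule can in principle propagate effects from a corner along a segment. I would handle this by a \emph{finite speed of propagation / localization} argument: the influence of the boundary data of a segment travels at most a linear distance $Ct$ into its interior in time $t$ (this is itself a monotonicity + comparison-with-half-plane statement), so on the bulk of each segment—which has macroscopic length—the free half-plane comparison is valid, and the exceptional neighborhoods of the $N$ corners have total size $O(N \cdot \delta L)$, negligible for the Hausdorff bound after choosing first $\delta$ small and then $N$ (equivalently, the angular mesh) appropriately. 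A secondary nuisance is that $b(\theta)$ vanishes at $\theta = k\pi/2$ and is non-smooth there, so flat horizontal/vertical segments of $\partial\mathcal D$ barely move; but this is exactly captured by \eqref{mosdef} (those supporting lines translate slowly), and the comparison dynamics degenerate gracefully, so no separate treatment is needed beyond checking that the TASEP limit shape indeed gives $b(k\pi/2)=0$. Assembling these—local LPP limit shape, monotone sandwiching, finite-speed localization, and continuity of \eqref{mosdef}—yields the stated convergence.
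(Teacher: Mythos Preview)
Your upper bound is essentially correct and matches the paper's approach: compare the droplet with half-planes (the paper uses four quarter-planes for the square) containing $L\DD$, apply the TASEP hydrodynamic limit to each, and intersect.

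The gap is in the lower bound, which is \emph{not} symmetric to the upper bound. Half-plane comparison goes the wrong way: if $H\supset L\DD^-$, monotonicity gives $\mal^{\DD^-}(t)\subset \mal^H(t)$, an upper bound, not the needed lower bound $\mal^{\DD^-}(Lt)\supset L\DD^-(t)$. Your finite-speed-of-propagation idea is the right instinct, but what it actually buys you is that, near a vertex of $\DD^-$, the dynamics coincides with the dynamics started from the corresponding \emph{wedge} (not half-plane); one then needs that the wedge dynamics converges to the correct piece of $\DD^-(t)$ via \eqref{mosdef}, which is a Rost/rarefaction-fan computation, not a flat translation. More importantly, this localization is only valid until adjacent wedges interact, and you give no mechanism for continuing past that time. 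For your fine polygon $\DD^-$ the sides are short, so interaction happens after a time proportional to the side length; covering the full $O(1)$ time interval requires an iteration that restarts the argument, and this is exactly where the real work lies.

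The paper's proof (for the square $[-1,1]^2$) handles the lower bound differently. First, for $t<1$ it shows the four corner (quarter-plane) dynamics are \emph{exactly} independent---equality $\mal(t)=\bigcap_{i=1}^4\mal^i(t)$ holds until the first spin adjacent to an axis flips, and a CLT for the rightmost TASEP particle shows this time is $\ge L(1-o(1))$; Rost's theorem then gives the lower inclusion directly. For $t\ge 1$ the trick is to replace $\DD(t)$ by $\bar\DD(t):=\DD(t)\cap[-r(t),r(t)]^2$ for a slightly smaller $r(t)<d(t)$, which artificially inserts short flat axis-parallel segments at the four poles. These flat parts guarantee that the four corners again evolve independently for a further macroscopic time $\gep$ (same rightmost-particle argument), so Theorem~\ref{visvis} controls each corner and one checks the deterministic inequality $u(x,(1-\delta)\gep)<\bar g(x,1+\gep(k+1))$. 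Iterating $O(\gep^{-1})$ times carries the lower bound to the extinction time. This ``insert a flat part at the pole and restart'' step is the main idea you are missing.
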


For simplicity we choose to expose the proof in the case where the initial condition is the square $[-1,1]^2$.
Starting from a general convex domain is not conceptually more complicated but would involve notational complication that we wish to avoid.

Let us describe $\mathcal D(t)$ when $\mathcal D=[-1,1]^2$:
define the function $g$ on $\bbR\times \bbR_+$ by
\begin{equation}\label{defj}
g(x,t):=\begin{cases}
         \frac{x^2+t^2}{2t} \text{ if } |x|\le t,\\
                 |x|  \text{ if } |x|\ge t,
        \end{cases}
\end{equation}
let $\mathcal D_1(t)$ denote the epigraph of $ g(\cdot,t)-2$ in the base $(0,\bff_1,\bff_2)$ 
where $\bff_1:= \frac{\be_1-\be_2}{2}$, $\bff_2:= \frac{\be_1+\be_2}{2}$, 
i.e.
\begin{equation}\label{dsadsa}
 \DD_1(t):=\{ x\bff_1+y \bff_2\ | \ x\in \bbR, y \ge g(x,t)-2\},
\end{equation}

and $\DD_i$, $i=2,\dots, 4$, denote its image by rotation of an angle $(i-1)\pi/2$.
Then one has
\begin{equation}\label{dsadsa2}
\DD(t):=\bigcap_{i=1}^4 \DD_i(t).
\end{equation} 
It is a convex compact set and it is the solution of \eqref{mosdef} when
$h(\theta)=\sqrt{2}/(|\cos(\theta+\pi/4)|+|\sin(\theta+\pi/4)|)$ is 
the support function of $[-1,1]^2$.

\begin{rem}\rm \label{magfied}
 The proof of Theorem \ref{resap} also adapts quite easily to the case of positive magnetic fields $h\in(0,\infty)$. In that case
the scaling limits remains the same but time has to be rescaled by a factor $\cosh (h)/\sinh(h)$.
The non-convex case is more delicate as the limiting shape has not a nice description and it  might split into several connected components.
All of this makes the analysis more complicated on a technical point of view but we believe that an analogous result could be proved in that case with some efforts if the initial
condition is sufficiently nice, e.g.\ with smooth boundary.
\end{rem}

% \begin{equation}
% \mathcal C_x:= x+[-1/2,1/2]^2.
% \end{equation}
% One defines 
% \begin{equation}\label{mal}
% \mal(t):=\bigcup_{\{x: \ \sigma_x(t)=-1\}} \mathcal C_x,
% \end{equation}
% 
% for any compact set $\mathcal C\subset \bbR^2$ define
% \begin{equation}\begin{split}
% \label{eq:cdelta}
%  \mathcal C^{(\delta)}:=\bigcup_{x\in \mathcal C} B(x,\gd),\quad\quad
%  \mathcal C^{(-\delta)}:=\left(\bigcup_{x\notin \mathcal C} B(x,\gd)\right)^c.
% \end{split}\end{equation}
% Then one has 
% 
% \begin{theorem}\label{resap}
% The process
% $(\frac{1}{L}\mal(t))_{t\ge 0}$ converges to $(\mathcal G_t)_{t\ge 0}$ in probability, 
% in the topology of time-uniform convergence for the Haussdorf metric. This is to say,  for any $\delta>0$
% with probability tending to one,
% \begin{equation}
% \mathcal G_t^{(-\delta)}\subset (\frac{1}{L}\mal(t))_{t\ge 0}\subset  \mathcal G_t^{(\delta)}
% \end{equation}
% \end{theorem}

\subsection{Interpolating between $h=0$ and $h>0$: the weak magnetic field limit}

Remark \ref{magfied} says that the scaling limit of set of $-$ spins is somehow independent of the intensity of the magnetic fields
apart from an armless scaling factor. We want to discuss shortly here a way to obtain a non trivial intermediate regime (sometimes referred to as
a crossover regime) between the cases $h>0$
and $h=0$. We give a brief description of a conjecture concerning that case, 
based on heuristic consideration.

\medskip

The intermediate regime should take place when $h=h_L\sim \alpha L^{-1}$. When $h\gg L^{-1}$ Theorem \ref{resap} should hold whereas Theorem 
\ref{th:convex} should be valid when $h\ll  L^{-1}$.
In this intermediate regime the scaling should be $L^{2}$ and the limiting equation for the support function

\begin{equation}
  \label{eq:mix}
  \begin{cases}
\partial_t h(\theta,t)=- a(\theta)k(\theta,t)-\alpha b(\theta)\\
h(\theta,0)=h(\theta)
  \end{cases}
\end{equation}

The existence and uniqueness of solution to the above equation is a challenging issue.

\medskip

Note that this type of scaling has already be studied in the context particle system in the case where the 
interface between $+$ and $-$ is the graph of a function
(see Section \ref{prtsys}) 
and is often referred to as Weakly Asymmetric Exclusion Process or WASEP.

\medskip

It has been proved in \cite{cf:DPS, cf:Gard, cf:KOL} that in a certain sense, the height function of the particle system converges to the solution of
\begin{equation}
\partial_t u= \frac 1 2\partial^2_x u+\frac 1 2 (1-\partial_x u)^2,
\end{equation}
which is as can be checked by the reader, is equivalent to \eqref{eq:mix} in this case.
These result could be used as a starting point to get the scaling limit starting from a compact convex shape filled with $-$. Again, the major difficulty 
seem to be on the analytical side, as on the probabilistic side, most of the tools applied in the proof of Theorem \ref{th:convex} could be applied.

\subsection{Higher dimensions}

Of course, the Ising model and the dynamics are also defined in higher dimensions $d\ge 3$, for which one as also a phase transition, and they have also 
raised 
a lot of interest (in particular the case $d=3$ for obvious reason). Let us briefly talk about what should be true in that case with a focus on 
low-temperature with plus boundary condition (for high temperature, we can notice that the result of \cite{cf:LS} holds in every dimension).

\medskip

The Lifshitz conjecture for the low-temperature dynamics should hold in fact in any dimension. However rigorous results 
are even more difficult to obtain for the model
with zero-magnetic fields:
for finite $\gb$ the best known bound for the mixing time at low temperature with $+$ boundary condition is super-exponential 
(it is equal to $O(\exp(L^{d-2}(\log L)^2))$ \cite{cf:Sugi}), which is really far from the conjectured $O(L^2)$

\medskip

Concerning the zero-temperature case, results are much more precise:
in \cite{cf:CMST}, it has been proved that for $d=3$, the time necessary for the last $-$ spin to disappear starting from a full cube with $+$ boundary condition 
is of order $L^2(\log L)^{O(1)}$, a similar upper bound has been derived for arbitrary dimension $d$ in \cite{cf:L} (with no matching lower bound).
However, it seems that with actual tool, we are very far from being able to prove a shape theorem similar to Theorem \ref{th:convex} when $d=3$,
or even to get rid of the $\log$.
A first step would be to derive on a heuristic level the anisotropy function (the equivalent of $a(\theta)$ from \eqref{eq:a}), 
and it seems highly non-trivial.

\medskip

The anisotropy function should have a lot of singularity when $d\ge 3$.
For instance, consider the three-dimensional dynamics on $(\bbZ^*)^3$ starting from initial condition
\begin{equation}\begin{cases}
 \sigma(x)=-1& \text{ if } x\in (\bbZ^*)^3\cap (\bbR\times B(0,L))\\
 \sigma(x)=+1& \text{ else }, 
\end{cases}
\end{equation}
where $B(0,L)$ denote the two dimensional Euclidean ball. If $L\ge 4$, then this 
initial configuration is stable, and the system stays forever in that state:
every $-$ spin has a strict majority of agreeing neighbors and the same for $+$ spin.
However, the mean curvature is well defined and positive at every point of the interface of the cylinder.

\medskip

For the case of zero-temperature with positive $h$, it should be rather simple to show that an hyper-cube full of minus of diameter $L$ needs a time
of order $L$ to disappear. Getting the exact asymptotic and a shape Theorem seems more difficult but probably not out of reach.

\subsection{Organization of the paper}

In Section \ref{TP} we present some of the main tools of the proof. They are: general monotonicity property of the dynamics, correspondence with 
particle systems and scaling limits for interface dynamics.

\medskip

In Section \ref{convex}, we sketch in detail the proof of Theorem \ref{th:convex} from \cite{cf:LST}

\medskip

In Section \ref{asder}, we prove our main result, that is Theorem \ref{resap}, and underline the similarities and difference between this proof
and the one of the zero-magnetic fields case.

\section{Interface dynamics, correspondence with particle systems and other technical tools} \label{TP}
\subsection{Graphical construction and monotonicity}\label{grcont}

We present here a construction of the
dynamics (called sometimes the \emph{graphical construction}) that
yields nice monotonicity properties.  We consider a family of
independent Poisson clock processes $(\tau^{x})_{x\in\bar{\Z}^2}$. that is, to each site $x\in (\bbZ^*)^2$ one associates an independent random
sequence of times
$(\tau^x_{n})_{n\ge0}$, that are such that $\tau^x_{0}=0$ and
$(\tau^x_{n+1}-\tau^x_{n})_{n\ge 0}$ are IID exponential variables
with mean one.  One also defines random variables $(U_{n,x})_{n\ge 0},
x\in (\bbZ^*)^2$ that are IID Bernoulli variables,
that assume value $\pm 1$ with probability $e^{\pm h}/(2\cosh(h))$.

\medskip

Then given an initial configuration $\xi\in \{-1,1\}^{(\bbZ^*)^2}$
one constructs the dynamics $\sigma^{\xi}(t)$ starting from
$\sigma^\xi(0)=\xi$ as follows
\begin{itemize}
 \item $(\sigma_x(t))_{t\ge 0}$ is constant on the intervals of the type $[\tau^x_{n},\tau^x_{n+1})$. 
 \item $\sigma_x(\tau^x_{n})$ is chosen to be equal to $\pm1$ if a
   strict majority of the neighbors of $x$ satisfies
   $\sigma_y(\tau^x_{n})=\pm1$, and $U_{n,x}$ otherwise (this definition makes sense as, almost surely, two neighbors will not update at the same time).
\end{itemize}

This construction gives a simple way to define simultaneously the
dynamics for all initial conditions and boundary condition, using the same variable $U$ and $\tau$ (we denote by $P$ the associated
probability). 
Moreover the coupling of dynamics thus obtained preserves the natural order 
on $\{-1,+1\}^{(\bbZ^*)^2}$, given by
\begin{equation}
 \xi\ge \xi' \Leftrightarrow \xi_x\ge \xi'_x \;\;\text{for every\;\;} x \in (\bbZ^*)^2
\end{equation}
(this order is just the opposite of the inclusion order for the set of "$-$" 
spins, which is therefore also preserved).
Indeed, if $\xi\ge \xi'$ and $\sigma^\xi$ resp. $\sigma^{\xi'}$ denote the dynamics with initial condition $\sigma$ resp. $\sigma'$
using the same $\tau$ and $U$, with the above construction, one has $P$-a.s.
\begin{equation}
\forall t>0\;\;\;\;\;\;\; \sigma^{\xi}(t)\ge \sigma^{\xi'}(t).
\end{equation}
It also yields monotonicity with respect to boundary condition and other nice properties.

\subsection{The height function of the simple exclusion process}\label{prtsys}

We deal in this subsection and the next ones, of special initial conditions and boundary conditions, for which the  
the interface between $+$ and $-$ at all times is given by the graph of a function in the coordinate frame $(\bff_1, \bff_2)$ 
(recall the definition above equation \eqref{dsadsa}). This cases are easier to treat for two reasons: firstly
the rescaled interface motion can be written a functional PDE, which is easier to deal with than a flow of curves; secondly, 
there are bijective correspondence with particle systems 
that facilitates the probabilistic treatment of these interface problems (and also gives extra motivation to study them). 

\medskip

Results concerning interface models are one of  the principal building bricks for the proof of Theorem \ref{th:convex} and \ref{resap}.
We describe the correspondence in this subsection and state convergence results in the symmetric and the asymmetric cases in 
Section \ref{symasym} resp \ref{asyma}.
\medskip

We say that a set $A\subset (\bbZ^*)^2$ or $A\subset \gL_L$
is \textit{increasing} if and only if
\begin{equation}
 \forall x \in A, \left(y\ge x \Rightarrow y \in A\right),
\end{equation}
where the order in $(\bbZ^*)^2$ is the usual partial order $(y_1,y_2)\ge (x_1,x_2)$ where $y_1\ge x_1$ and $y_2\ge x_2$. 

\medskip

We define the interface between $+$ and $-$ for a configuration $\sigma$ to be the topological boundary of
\begin{equation}
 \mathcal A^-(\sigma):=\bigcup_{\{x: \ \sigma_x=-1\}} \mathcal C_x,
\end{equation}
which also is equal to the boundary of $\mathcal A^+(\sigma)$ with an analogous definition.

\medskip

If the set of $+$ spins is an increasing set for $\sigma(0)$ then it remains an increasing set for $\sigma(t)$ for all $t\ge 0$ (the only
site whose spin can flip have two $+$ neighbors and two $-$ neighbors and the reader can check that flipping one of them does 
not break the property). Also, when the set of $+$ spin is increasing, the interface between 
$+$ and $-$ is the graph of a function $\eta: \bbR\to \bbR$ in the 
frame $(0,\bff_1,\bff_2)$. The restriction of $\eta$ to $\bbZ$ is integer valued and satisfies $|\eta(x+1)-\eta(x)|=1$ for all $x\in \bbZ$
and it is linear on every interval of the type $[x,x+1]$, $x\in \bbZ$. Hence in a small abuse of notation we can also consider $\eta$
 as a function on $\bbZ$, and see the dynamics as a Markov chain on the state-space

\begin{equation}
\gO^{0} :=
\{\eta\in\bbZ^{\bbZ} \ | \ |\eta(x+1)-\eta(x)|= 1, \; \forall x \in \bbZ\}\,.
\end{equation}
In the case where the dynamic is restricted to $\gL_L$, we choose the boundary condition to be $+$ on the up and right sides and 
$-$ on the two opposite side and 
the interface function $\eta$ defined on $[-2L,2L]$ (see Figure \ref{fig:coresis2}).

 \begin{figure}[hlt]

\leavevmode
\epsfxsize =13 cm
\psfragscanon 

\psfrag{O}{\small $\bf{0}$}
\psfrag{M}{\small $M$}
\psfrag{N}{\small $N$}
\psfrag{M+N}{\small $M+N$}
\psfrag{M-N}{\small $M-N$}

\epsfbox{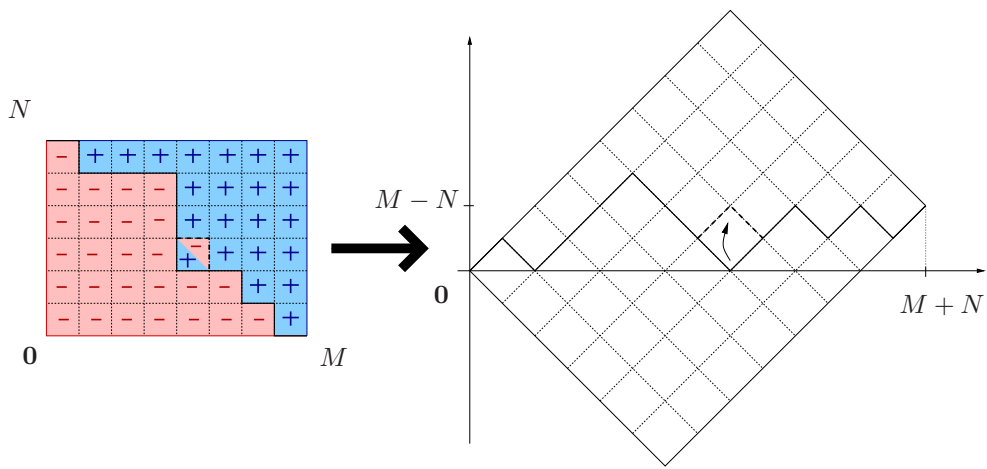}
\begin{center}
\caption{\label{fig:coresis2}
One-to-one correspondence between the dynamics in a $N\times M$ rectangle with mixed 
boundary conditions and the corner-flip dynamics on paths. This correspondence is of course also valid in the case of a $2L\times 2L$ square.
An example of possible flip update is displayed}
\end{center}
\end{figure} 

\medskip

The dynamic of the interface can be described as follows: sites $x\in \bbZ$ (or $x\in [-2L+1,2L-1]$)
are equipped with independent Poisson clocks with rate one; when a clock rings at $x$
the path $\eta$ is replaced by $\eta^{x}$ with probability $e^h/(2 \cosh(h))$ and $\eta_x$ with probability $e^h/(2 \cosh(h))$, 
where $\eta^{x}$ (and $\eta_x$) are respectively the maximum and minimum path in $\gO$ that coincides with $\eta$ on $\bbZ \setminus \{ x \}$.
The paths $\eta_x$ and $\eta^x$ differ if and only if $\eta$ has a local extremum at $x$ i.e. if $\eta(x+1)=\eta(x-1)$).

\medskip

This description is only formal in the full line case since the set of update time is dense, but is a rigorous definition 
of the generator of the chain (for a general proof of existence of a chain with such generator see \cite[Chapter 1]{cf:Lig}).

\medskip

Finally this interface dynamic can be mapped onto a one dimensional particle system.
For any $x \in \bbZ$ set $$\xi(x)=(\eta(x)-\eta(x+1)+1)/2$$ and say that a particle lies on the site $x$ when $\xi(x)=1$.
With the alternative view the dynamic can be described as follows: each particle jumps to the right with rate $e^h/(2 \cosh(h))$, 
and to the left with
rate $e^{-h}/(2 \cosh(h))$, with jumps being canceled if the aimed site is already occupied by another particle.
In the case of $\eta$ defined on a segment, we define the particle in the same manner, with the constraint that 
the extremity of the particle system are ``closed'', so that particle cannot run through them.
This system is called the simple exclusion process .

When $h=0$, the particles perform symmetric motions and one speaks about SSEP (symmetric dimple exclusion process).
When $h\in (0,\infty]$, the particles are biased towards the right and one talks about ASEP (asymmetric).
When $h=\infty$ particles can only jump to the right and the system is called TASEP (totally asymmetric)

\medskip

For all these particle systems, the only infinite volume equilibrium measures are the one under which $\xi(x)$ are IID Bernoulli variables.

\subsection{Scaling limit for the Asymmetric Simple Exclusion Process}\label{symasym}

This relation to one dimensional particle systems gave additional motivation to study these interface motion and
result for the scaling limit of the interface have been obtained. Let us cite the first one, due to Rost \cite{cf:Rost}
that concerns the case where the initial condition is $-$ in a quadrant and $+$ in the three others, in 
the totally asymmetric case (recall the definition of the function $g$ in equation \eqref{defj}).

\begin{theorem}[{\cite[Theorem 1]{cf:Rost}}]\label{rost}
Consider the stochastic Ising model on $(\Z^*)^2$ at temperature zero with $h=\infty$ and initial condition $\sigma_0$
$-$ in $\bbR^2_+$ and $+$ elsewhere.
Let $\eta(x,t)$ denote the function whose interface in $(0,\bff_1,\bff_2)$ is the interface between $+$ and 
$-$ for $\sigma(t)$ ($\eta(x,0)=|x|$). Then one has for all $\gep>0$ and $T<\infty$
\begin{equation}
\lim_{L\to \infty} \bbP\left[ \maxtwo{x\in \bbR}{t\in[0,T]} |\frac{1}{L}\eta(Lx,Lt)-g(x,t)|>\gep \right]=0.
\end{equation}
\end{theorem}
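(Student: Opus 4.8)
The plan is to identify the corner-flip interface dynamics with the TASEP via the bijection $\xi(x)=(\eta(x)-\eta(x+1)+1)/2$ described above, so that the statement about $\frac1L\eta(Lx,Lt)$ becomes a statement about the integrated current / hydrodynamic density profile of TASEP started from the step initial condition. Under the map, $\eta(x,0)=|x|$ corresponds to the configuration where all sites $x<0$ are occupied and all sites $x\ge 0$ are empty — the classical step (Heaviside) initial profile for TASEP. The height function $\eta(x,t)$ is, up to an additive constant and the affine change of frame to $(0,\bff_1,\bff_2)$, exactly $-2$ times the integrated current through the bond near $x$, i.e. $\eta(0,t)-\eta(0,0)$ counts the net number of particles that have crossed the origin by time $t$.

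First I would set up this dictionary carefully: define $N_t(x)$ to be the number of particles to the right of site $x$ at time $t$, check that $\eta(x,t)=\eta(x,0)+2(N_t(x)-N_0(x))$ (signs depending on orientation conventions), and note that for the step profile $N_0(x)=0$ for $x\ge 0$ and $N_0(x)=|x|$ for $x<0$, consistent with $\eta(x,0)=|x|$. Then the uniform (in space and time) convergence $\frac1L\eta(Lx,Lt)\to g(x,t)$ is equivalent to the statement that the rescaled density field $\rho_L(x,t):=$ (local particle density near $Lx$ at time $Lt$) converges uniformly to the entropy solution $\rho(x,t)$ of the Burgers equation $\partial_t\rho+\partial_x(\rho(1-\rho))=0$ with step data $\rho(x,0)=\ind_{\{x<0\}}$, whose explicit solution is the rarefaction fan $\rho(x,t)=\frac12(1-x/t)$ for $|x|\le t$ and $\rho=\ind_{x<0}$ for $|x|\ge t$ — and one checks by direct integration that the corresponding height function is precisely $g$ of \eqref{defj} (after the frame change). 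Equivalently, and more directly, the current identity $\lim_L \frac1L N_{Lt}(Lx)=t\,\phi(x/t)$ with $\phi$ the Legendre-type transform associated to the flux $j(\rho)=\rho(1-\rho)$ yields $g$ after the affine correction.

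The core analytic input is Rost's hydrodynamic limit for TASEP from the step profile. I would invoke this either as the superadditivity/subadditive ergodic theorem argument (Rost's original route, using the observation that $N_t(x)$ is superadditive in a suitable sense and applying Kingman's theorem, then identifying the limiting shape by a variational formula), or more cleanly via the last-passage-percolation representation: $N_t(x)$ equals (in distribution, after the standard coupling) a last-passage time in a corner of exponential weights, and the law of large numbers for last-passage percolation plus its explicit limit shape gives the deterministic profile $g$. The passage from pointwise-in-$(x,t)$ convergence to the uniform statement $\maxtwo{x\in\bbR}{t\in[0,T]}|\cdots|>\gep$ is then a soft argument: monotonicity of $\eta$ in the initial condition (from the graphical construction / attractiveness of TASEP), Lipschitz control of both $\eta(\cdot,t)$ in $x$ (slopes bounded by $1$) and of $g$, finiteness of speed of propagation so that only $|x|\le CT$ matters, and continuity/monotonicity in $t$, together with a finite-net argument over $(x,t)$.

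The main obstacle is the uniformity — both in that one must control the process on the whole infinite line (handled by finite speed of propagation: outside $|x|\gtrsim t$ the configuration is frozen, $\eta(x,t)=|x|=g(x,t)$, so effectively the sup is over a compact set) and in that one needs the convergence to hold simultaneously over $t\in[0,T]$. Since the underlying hydrodynamic theorem is a classical and delicate result (Rost \cite{cf:Rost}), the honest statement is that Theorem \ref{rost} \emph{is} essentially Rost's theorem repackaged through the Ising-interface/TASEP correspondence, so the ``proof'' consists of (i) verifying the correspondence and the initial-condition matching, (ii) checking that Rost's limit shape for the current equals $g$ after the change of frame, and (iii) upgrading pointwise to uniform convergence via monotonicity, Lipschitz bounds, and finite propagation speed. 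The genuinely new work is only (ii) and (iii), both of which are routine; the hard probabilistic content is imported wholesale from \cite{cf:Rost}.
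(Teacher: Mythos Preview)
Your proposal is correct and matches the paper's treatment: Theorem~\ref{rost} is not proved in the paper at all but is quoted directly from Rost~\cite{cf:Rost}, with the Ising-interface/TASEP correspondence of Section~\ref{prtsys} serving as the dictionary. Your observation that the ``proof'' consists only of (i) matching the correspondence and initial condition, (ii) identifying Rost's limit shape with $g$, and (iii) a routine upgrade to uniformity, with the hard probabilistic input imported wholesale from~\cite{cf:Rost}, is exactly how the paper uses the result.
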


The above the result has been developed in order to be able to treat all kind of initial condition.
Let $u_0$ be a $1$-Lipshitz function and let $\sigma^L_0$ be a sequence of initial configuration for which the set of $-$ is an increasing set
and the initial interface function $\eta^L_0$ satisfies:

\begin{equation}\label{laconond}
 \max_{x\in \bbR }\left|\frac{1}{L}\eta^L_0(Lx)-u_0(x)\right|=o(1).
\end{equation}
Then define $u(x,t)$ as being the unique \textsl{viscosity solution} of 
\begin{equation}\label{viscous}\begin{cases}
 \partial_t u&= \frac{1}{2}(1-(\partial_x u)^2),\\
 u(\cdot,0)&=u_0,
\end{cases}
\end{equation}
which can be showed to be equal to (recall \eqref{defj}),
\begin{equation}\label{comendir}
 u(x,t):=\inf_{y}\{u_0(y)+ g(x-y,t)\}.
\end{equation}
We use \eqref{comendir} as the definition of $u$. It turns out that $u$ is the description of the scaling limit
of the interface when the initial condition satisfies \eqref{laconond}. Before stating the theorem, let us explain briefly 
on heuristic grounds why it is so.

\medskip

With use the particle system description given at the end of the previous section.
With this setup, the height variation $\eta(x,t)-\eta(x,0)$ is equal to twice the number of particle jumping from $x-1$ to $x$ in the 
time interval $[0,t]$.

\medskip

We consider the simplified case where the initial profile is linear with slope $s$ and the particle system is in an equilibrium configuration.
i.e.\
$(\xi_x)_{x\in \Z}$ are IID Bernoulli variables of parameter $\rho:=(1-s)/2$.
In that case the jump rate of particle from $x-1$ to $x$ is equal to 
\begin{equation*}
 \bP[\eta(x-1,t)=1\ ; \ \eta(x,t)=0]= \rho(1-\rho)= \frac{1}{4}(1-s^2).
\end{equation*}
Assuming that there is some kind of ergodicity in the system, a law of large number should hold and for any fixed $x$ a.s.\ 
\begin{equation}
 \eta(t,x)-\eta(0,x)=  \frac{1}{2}(1-s^2)t(1+o(1)).
\end{equation}
The reason why the argument also works when the original density of particle is non uniform is that 
locally, the system relaxes quickly to equilibrium so that the field $\eta(x,t)_{x\in \bbZ}$ looks locally like IID Bernoulli after a small time.

\medskip

The following Theorem was proved in \cite{cf:Rez} for ASEP in arbitrary dimension.
The reader can also refer to  \cite{cf:Sep} for a generalization to the $K$-exclusion process.

\begin{theorem}\label{visvis}
Consider the stochastic Ising model at temperature zero with $h=\infty$ and initial condition $\sigma^L_0$ as above (and $\eta^L$ the corresponding 
interface).
Then, the rescaled interface converges in law to $u$ in the following sense:
For every $\gep$, for every positive $T$ and $K$

\begin{equation}\label{viscoos}
\lim_{L\to \infty}\bP\left[\maxtwo{|x|\le K}{t\le T} |\frac 1 L \eta^L(Lx,Lt)-u(x,t)|\ge \gep\right]=0.
\end{equation}
\end{theorem}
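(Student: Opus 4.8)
The plan is to deduce this from the hydrodynamic‑limit result for ASEP/TASEP quoted from \cite{cf:Rez}, translated into the language of the interface (height) function, together with a comparison argument that handles the passage from the "local equilibrium" heuristic to a uniform statement in $x$ and $t$. First I would recall the particle–interface dictionary from Section \ref{prtsys}: $\xi(x)=(\eta(x)-\eta(x+1)+1)/2$, and the integrated current $J(x,t)$ (number of particles crossing the bond $(x-1,x)$ up to time $t$) satisfies $\eta(x,t)-\eta(x,0)=2J(x,t)$. Under the hypothesis \eqref{laconond}, the initial occupation variables have slowly varying density $\rho_0(x)=(1-u_0'(x))/2$ (in the weak sense, since $u_0$ is only Lipschitz), so Rezakhanlou's theorem applies: the empirical density $\rho(x,t)$ converges to the entropy solution of the conservation law $\partial_t\rho+\partial_x\big(\rho(1-\rho)\big)=0$, and the current converges to the corresponding flux integral. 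Integrating in space and using $\eta=\,$(antiderivative of $1-2\xi$), the entropy solution for $\rho$ corresponds exactly to the Hopf–Lax formula \eqref{comendir} for $u$, i.e. the viscosity solution of the Hamilton–Jacobi equation \eqref{viscous}. This is a standard and well‑documented correspondence (HJ vs.\ scalar conservation law in one dimension), so I would state it as a lemma and give only the short computation identifying the Legendre transform of $s\mapsto\tfrac12(1-s^2)$ with $g(\cdot,t)$.

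Next I would upgrade the pointwise/weak convergence to the uniform‑in‑$(x,t)$ statement \eqref{viscoos}. The key structural facts to exploit are: (i) monotonicity — from the graphical construction of Section \ref{grcont}, the interface dynamics is monotone, so if $\eta^{L,-}_0\le \eta^L_0\le \eta^{L,+}_0$ pointwise then the same ordering persists for all times; and (ii) the dynamics is $1$‑Lipschitz–preserving, and shifts commute with the dynamics, so $\eta^L(\cdot,t)$ stays $1$‑Lipschitz in the appropriate discrete sense. To get uniformity, I would sandwich $u_0$ between finitely many piecewise–linear functions $u_0^{\pm,\delta}$ with $\|u_0^{\pm,\delta}-u_0\|_\infty\le\delta$, run the dynamics from lattice initial conditions approximating these (here one uses that the scaling limit depends continuously on $u_0$ for the $\sup$ norm, which follows from the contraction property $\|u(\cdot,t)-\tilde u(\cdot,t)\|_\infty\le\|u_0-\tilde u_0\|_\infty$ of the Hopf–Lax semigroup), and apply the convergence for each of these finitely many comparison profiles. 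Monotonicity then traps $\frac1L\eta^L(Lx,Lt)$ between $u^{-,\delta}(x,t)-o(1)$ and $u^{+,\delta}(x,t)+o(1)$ uniformly on $|x|\le K$, $t\le T$; letting $\delta\to0$ after $L\to\infty$ gives the claim. Uniformity in $t$ additionally uses that $t\mapsto u(x,t)$ is Lipschitz (bounded speed) together with a discretization of $[0,T]$ and the fact that $\eta^L(x,\cdot)$ cannot move faster than rate‑one clocks allow, so a union bound over $O(L)$ or $O(LT)$ space–time points costs only a polynomial factor against the exponentially small fluctuation estimates underlying Rezakhanlou's theorem (or, if one prefers to avoid quantitative tail bounds, one uses a standard tightness + finite‑dimensional convergence argument in the Skorokhod‑type space of Lipschitz profiles).

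The main obstacle is precisely this last upgrade: Rezakhanlou's theorem is usually stated as convergence of empirical measures (weak convergence of the density field) for fixed macroscopic time, whereas \eqref{viscoos} demands convergence of the height function uniformly over a space–time window. Bridging this gap cleanly is where the monotonicity of the corner‑flip dynamics and the $L^\infty$‑contraction of the limiting semigroup do the real work — they convert weak convergence of densities into uniform convergence of their antiderivatives — and one must check that the boundary conditions / possible "closed ends" in the finite‑volume case (cf.\ the segment $[-2L,2L]$ in Section \ref{prtsys}) do not interfere on the window $|x|\le K$ for $L$ large, which is immediate since information propagates at finite speed. Everything else is bookkeeping: identifying $\rho_0$ from $u_0$, matching the flux $\rho(1-\rho)$ with the Hamiltonian $\tfrac12(1-s^2)$, and recognizing \eqref{comendir} as the Hopf–Lax formula.
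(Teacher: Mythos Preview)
The paper does not give its own proof of this theorem: it is quoted as a known result, attributed to Rezakhanlou \cite{cf:Rez} (with a pointer to Sepp\"al\"ainen \cite{cf:Sep} for a generalization), and the text preceding the statement offers only a heuristic explanation based on local equilibrium and the flux $\rho(1-\rho)$. So there is no ``paper's own proof'' to compare against; you have written a proof sketch for a result the paper simply imports.

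That said, your outline is correct and is essentially the standard route one takes to go from the hydrodynamic limit for the particle density to the uniform height-function statement \eqref{viscoos}. The identification of the Hamilton--Jacobi equation \eqref{viscous} with the conservation law $\partial_t\rho+\partial_x(\rho(1-\rho))=0$ via $\rho=(1-\partial_x u)/2$, and of the Hopf--Lax formula \eqref{comendir} with the entropy solution, is exactly right. Your upgrade from weak convergence of the density to uniform convergence of its antiderivative via monotonicity of the corner-flip dynamics and the $L^\infty$-contraction of the Hopf--Lax semigroup is the natural argument; the finite-speed-of-propagation remark disposing of boundary effects is also appropriate. The only place to be a little more careful is the uniformity in $t$: rather than invoking ``exponentially small fluctuation estimates'' (which are not part of the Rezakhanlou statement as usually formulated), it is cleaner to use that both $t\mapsto u(\cdot,t)$ and $t\mapsto \frac{1}{L}\eta^L(L\cdot,Lt)$ are Lipschitz in $t$ for the sup norm (the latter because the number of corner flips in a bounded spatial window over a time interval of length $L\Delta t$ is $O(L\Delta t)$ with overwhelming probability), so convergence at a finite mesh of times in $[0,T]$ plus equicontinuity gives the uniform-in-$t$ statement without any quantitative tail bounds.
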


These two results remain valid for the ASEP, but time has to be rescaled by a factor $\gamma(h)=\coth(h)$.

\subsection{Scaling limit of the Symmetric Simple Exclusion Process}\label{asyma}

The case where $h=0$ corresponds to the case where the particles perform symmetric jumps.
In that case the speed of the particle is zero, and one has to rescale time by $L^2$ to get a non-trivial scaling limit.

\medskip

It is now a classical result that any dimension, the weak-limit of the density profile of particle for the SSEP is given by the heat equation (see e.g.\ \cite[Chapter 4]{cf:KL}).
In \cite{cf:LST} we proved an analogous strong limit result for the profile $\eta$, in the case of dynamics restricted to a box:
consider the zero-temperature dynamics with zero magnetic fields on $\gL_L$ with $+$ boundary condition on the up and right sides and $-$ on the two others.
Let $t\mapsto \eta(\cdot,t)$ defined on $[-2L,2L]$ denote the function whose graph in $(0,\bff_1,\bff_2)$ is the interface between $+$ and $-$
for the zero temperature dynamics.

\medskip

Given a  $1$-Lipschitz function $v_0: [-2,2]\mapsto \bbR$ with $v_0(\pm 2)=0$, assume that one start the dynamics in 
$\gL_L$ with a sequence of initial condition 
$\sigma^L_0$ for which the interface $\eta$ satisfies 

\begin{equation}\label{initcond}
 \lim_{L\to \infty}\sup_{x\in [-2L,2L]}\left | \frac 1 L \eta(x,0)- v_0 (Lx)\right|=0
\end{equation}

Let $v$ be the solution of 
\begin{equation}\label{laplace}
\begin{cases}
 \partial_t v&= \frac 1 2 \partial^2_{x} v,\\
 v(0,t)&=v_0, \quad \forall x \in [-2,2],\\
 v(\pm 2,t)&=0, \quad \forall t\ge 0. 
\end{cases}
\end{equation}

\begin{theorem}[{\cite[Theorem 3.2]{cf:LST}}]\label{mickey}
Consider the dynamics on $\gL_L$ with the above mentionned initial condition, then
 for all $T\ge 0$ and $\gep>0$, w.h.p.
\begin{equation}
\sup_{t\in [0,T],x\in [-2,2]}\left | \frac{1}{L} \eta(Lx, L^2 t)- v(x,t) \right|\le \gep
\end{equation}
where $v$ is the solution of \eqref{laplace}.
\end{theorem}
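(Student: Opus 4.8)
**The plan is to prove this hydrodynamic limit for the corner-flip (SSEP) dynamics in a box via a combination of a martingale/generator analysis and, crucially, a monotonicity-plus-comparison argument that reduces the box case to known translation-invariant results.**

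First I would set up the basic probabilistic infrastructure. Recall from the particle-system correspondence in Section~\ref{prtsys} that the height function $\eta(\cdot,t)$ on $[-2L,2L]$ (with the mixed boundary conditions pinning $\eta(\pm 2L,t)=0$) corresponds to an SSEP on $[-2L,2L-1]$ with closed boundaries; the discrete gradient $\xi(x)=(\eta(x)-\eta(x+1)+1)/2\in\{0,1\}$ is the occupation variable. Writing $\mathcal{L}_L$ for the generator, one computes that for the diffusively rescaled height $\bar\eta_L(x,t):=\frac1L\eta(\lfloor Lx\rfloor, L^2 t)$, the quantity $\bar\eta_L(x,t)-\bar\eta_L(x,0)-\int_0^t (\mathcal{L}_L\,\tfrac1L\eta)(\lfloor Lx\rfloor, L^2 s)\,\dd s$ is a martingale whose quadratic variation is $O(1/L)$ — so the martingale part vanishes w.h.p.\ uniformly on $[0,T]$ by Doob's inequality. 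The drift term $\mathcal{L}_L\eta(x)$ is, up to a factor, the discrete Laplacian $\Delta\eta(x)=\eta(x+1)+\eta(x-1)-2\eta(x)$ evaluated at $x$ when $x$ is a local extremum of $\eta$, and $0$ otherwise; the point is that, unlike the asymmetric case, here the drift is \emph{exactly} $\tfrac12\Delta\eta$ with no need to close an equation through a density field, because a single corner flip changes $\eta(x)$ by $\pm 2 = \Delta\eta(x)$ precisely at extrema and the indicator ``$x$ is an extremum'' is automatically encoded in $\Delta\eta$ being nonzero. This is the feature that makes the $h=0$ height function, rather than only the density, have a clean closed scaling limit.

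Next, to make the above rigorous and to handle the initial condition \eqref{initcond}, I would exploit monotonicity (Section~\ref{grcont}): the graphical construction couples all initial conditions order-preservingly, and it also couples the box dynamics with full-line or half-line dynamics by a standard reflection/sandwiching argument. Concretely, one can sandwich the true $\eta$ between two dynamics whose initial data are piecewise-linear approximations of $v_0$ from above and below, for which Theorem~\ref{visvis}'s symmetric analogue — or rather the classical SSEP hydrodynamic limit (\cite[Chapter 4]{cf:KL}) translated to the height function — applies, together with a monotone comparison to solutions of the heat equation \eqref{laplace} with the correspondingly ordered initial data. The boundary pinning $v(\pm 2,t)=0$ is enforced on the particle side by the closed boundaries and the fact that the extreme columns never flip; on the PDE side, the Dirichlet condition follows because the approximating sub/super-solutions can be taken to vanish at $\pm 2$. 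Using continuity of the heat-equation solution map in $v_0$ (in sup norm, for $1$-Lipschitz data), the upper and lower bounds pinch to give \eqref{eq:mean}'s conclusion: $\sup_{t\le T,\,|x|\le 2}|\bar\eta_L(x,t)-v(x,t)|\le\gep$ w.h.p.

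The main obstacle, I expect, is the uniform-in-time control near the boundary and the handling of local extrema in the drift, i.e.\ making precise that the ``drift $=\tfrac12$(discrete Laplacian)'' heuristic survives the scaling limit without a separate replacement lemma. In the bulk this is essentially the one-block/two-blocks estimate or can be bypassed by the monotone sandwiching just described; but near $x=\pm 2$ one must rule out macroscopic boundary layers, which requires either an entropy/Dirichlet-form estimate showing the measure stays close to a slowly-varying product measure up to the boundary, or a careful comparison with an explicit barrier (e.g.\ the stationary profile, which here is simply the linear interpolation between the pinned endpoints). I would lean on the comparison-with-barriers route since monotonicity is already available and avoids the full relative-entropy machinery. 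A secondary technical point is upgrading convergence from fixed times to time-uniform convergence: this follows from the martingale estimate being uniform in $t$ plus equicontinuity in time of both $\bar\eta_L$ (a consequence of the $\pm2$ per-flip bound and a Poissonian count of flips) and of $v$.
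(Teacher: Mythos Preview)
Your proposal correctly identifies the crucial algebraic identity: for the symmetric corner-flip dynamics one has $\cL_L\eta(x)=\tfrac12\gD_{\mathrm d}\eta(x)$ \emph{exactly} as functions of the configuration, because the discrete Laplacian vanishes precisely when $x$ is not a local extremum. This is the paper's starting point too, and it implies that $\bbE[\eta_x(t)]$ solves the discrete heat equation with no replacement lemma needed.

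The gap is in your concentration step. For a fixed site $y$, the process $\eta(y,\cdot)$ jumps by $\pm 2$ at rate at most $1$, so over the unrescaled interval $[0,L^2T]$ the compensated martingale accumulates quadratic variation of order $L^2T$; after dividing by $L$, the quadratic variation of your $\bar\eta_L(x,\cdot)$-martingale on $[0,T]$ is $O(T)$, not $O(1/L)$. Doob's inequality therefore gives nothing: pointwise fluctuations of the rescaled height are $O(1)$ and do not vanish. (The $O(1/L)$ bound you may have in mind holds for the empirical \emph{density} tested against a smooth function, where a particle jump changes the observable by $O(L^{-2})$; it fails for the height at a single macroscopic point.) This is exactly why the paper (\cite{cf:LST}) does not argue pointwise. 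Instead it expands $\eta(\cdot,t)$ in the eigenfunctions $(\phi_k)$ of the discrete Dirichlet Laplacian on $[-2L,2L]$: since $\cL_L\hat\eta_k=-\tfrac12\lambda_k\hat\eta_k$ for each Fourier coefficient $\hat\eta_k=\sum_x\phi_k(x)\eta_x$, each mode evolves as a damped linear process whose martingale part is a genuine spatial average, and concentration is proved mode by mode and then summed.

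Your fallback route through monotone sandwiching and the density-level SSEP hydrodynamics of \cite[Chapter~4]{cf:KL} is a different strategy that might be made to work (weak convergence of the density does upgrade to uniform convergence of its primitive, the height), but you would still owe a treatment of the closed-boundary case, which is not the setting of those references, and you have not supplied one. In any case this is not the paper's approach.
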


Let us give shortly a heuristic explanation for Theorem \ref{mickey}.
The first thing is to show that the expected value of $(\eta_x)_{x\in[-L,L]}$ satisfies approximately the heat-equation.

\medskip

First notice that the expected drift of $\eta_x$ at time zero depends only on the value of $\eta_x$ and $\eta_{x\pm 1}$.
If $\eta_x$ is a local maximum it will jump down by two with rate $1/2$ whereas if it is a local minimum it will jump up by two with the same rate.
Else $\eta_x$ has drift zero so that

\begin{equation}
 \partial_t \bbE[\eta_x(t)] |_{t=0}=\begin{cases} 1 &\text{ if } \eta_{x\pm 1}(0)=\eta_{x-1}(0),\\
                                     -1 &\text{ if } \eta_{x\pm 1}(0)=\eta_{x+1}(0),\\
0 &\text{ else}.
                                    \end{cases}
\end{equation}
The reader can check that the r.h.s.\ of the above equation is equal to 
$$\frac{1}{2}(\eta_{x+1}(0)+\eta_{x-1}(0)-2\eta_x(0))=:\frac 1 2 \gD_{\mathrm d} \eta_x,$$
where $\gD_{\mathrm d}$ denotes the discrete Laplacian.
Hence, using the Markov property, one obtains that $(\bbE[\eta_x])_{x\in [-L,L]}$ satisfies

\begin{equation}
\partial_t\bbE[\eta_x(t)]=\frac 1 2 \gD_{\mathrm d}\bbE[ \eta_x(t)].
\end{equation}
Theorem \ref{mickey} is  obtained then by showing that:
\begin{itemize}
 \item The solution of the discrete heat-equation converges to the solution of the continuous one in the scaling limit 
(this is classic). 
 \item That $\eta_x(t)$ concentrates around its mean for large values of $L$. This is more delicate. We proved it by proving 
concentration for all the Fourier coefficient  in a base of eigenfunctions of $\gD_{\mathrm d}$.
\end{itemize}

Projections and trigonometry allows a heuristic derivation of \eqref{eq:meanc} from Theorem \ref{mickey} and equation \eqref{laplace}.
Indeed, it is quite reasonable to think that for the original dynamics with shrinking $-$ domain, the local drift of the interface is the same 
than for this dynamics with modified boundary condition.
However, there is a crucial ingredient missing to try to perform a proof. 

\medskip

Indeed Theorem \ref{mickey} does not say anything about the drift of the interface
around the ``poles'' of $\mal(t)$, \textit{i.e.} around the points for which one of the coordinates is extremal.
This problem was treated using another correspondence with particle system, first in \cite{cf:Spohn} where a complete sketch of proof was 
given in a special setup (periodic boundary condition).
This study was pushed further in \cite{cf:LST}  where all the technical details were handled to get a result that could be used to prove Theorem \ref{th:convex}.
We present this correspondence in the next Section.

\subsection{Dynamics near the ``poles'': Zero Range Process and Scaling Limit}\label{spoon}

Near the ''poles'', the dynamics cannot be easily reduced to an interface dynamics: indeed (see Figure \ref{fig:interditmove}) 
there are some possibilities for 
the set of $-$ to break into several connected components.
However one can introduce an auxiliary dynamics that  cancels  the transitions that 
makes $\mal(t)$ disconnected.

 \begin{figure}[hlt]

\leavevmode
\epsfxsize =10 cm
\psfragscanon 
\psfrag{pluspins}{$+$ spins}
\psfrag{minusspins}{$-$ spins}
\epsfbox{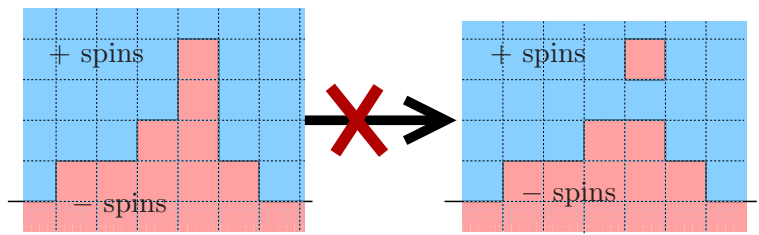}
\begin{center}
\caption{\label{fig:interditmove} An example of spin update that splits the interface into two disconnected components.
The interface dynamics presented in this section does not allow this kind of move.}
\end{center}
\end{figure}
\medskip

More precisely, we consider $\gL_L$ with boundary condition $-$ in the upper-half plane and $-$ one the lower-half plane, 
staring with an initial condition such that the interface between $+$ and $-$ is the graph of a function $[-L,L]\to \bbR$
(plus some vertical lines), and run a modified Ising dynamics that discards update if the interface after the update is not
a single connected curve. With this dynamics, the interface remains the graph of a function for all time (if one neglects the vertical lines). We call $\eta(x,t)$ the 
corresponding function (as there is no confusion possible with $\eta$ from the other section): by convention we choose it to be defined on $[-L,L]\cap \bbZ^*$ as it is piecewise constant.

\medskip

In this case also, 
we can describe the evolution of the gradient as a particle system.
For $x\in \{-L,\dots,L\}$ we set $$\xi_x(t):=\eta_{x+1}-\eta_{x-1}$$ to be the discrete gradient of $\eta$.
We say that each site  in $\{-L,\dots,L\}$ carries $|\xi_x(t)|$ particles. These particles are said to be of type $A$ if $\xi$ is positive and 
of type $B$ if $\xi$ is negative.

\medskip

Under the modified Ising dynamics depicted  above, the rules for the motion of the particles is the following:
\begin{itemize}
\item If there are $k$ particles on a site, they jump left or right with rate $1/(2k)$.
\item If a particle of type $A$ meets a particle of type $B$ they annihilate.
\end{itemize}
This kind of particle system where the jump rate depends on the number of particles on one site is called zero-range process 
(see Figure \ref{fig:partisys} for a scheme of the correspondence of 
interface dynamics with the particle system), and has been extensively studied in the literature (see e.g. \cite{cf:Andjel} where invariant measure of the 
process are studied).

 \begin{figure}[hlt]

\leavevmode
\epsfxsize =10 cm
\psfragscanon 
\epsfbox{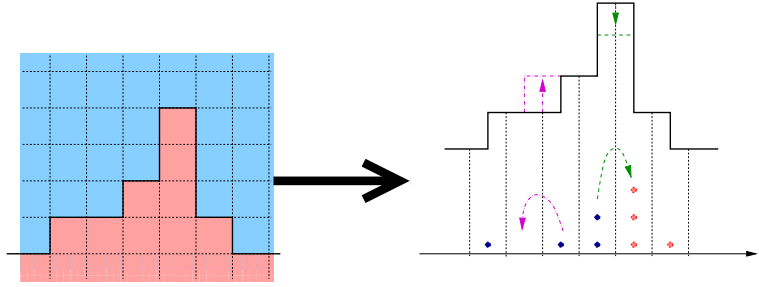}
\begin{center}
  \caption{\label{fig:partisys} Correspondence between interface
    dynamics and zero-range process.  Arrows represent possible
    motions for the interface and their representation in terms of
    particle moves.  When an A particle jumps on a B particle
    (green arrow), they annihilate.}
\end{center}
\end{figure}

\medskip

This correspondence with particle system was underlined in \cite{cf:Spohn}, and a partial proof of the scaling limit of the interface motion 
was given there: the scaling limit of $\xi$ should be the solution of the equation 

\begin{equation}\label{pajolijoli}
\partial_t w= \frac{\partial^2_x w}{1+|\partial_x w|^2}.
\end{equation}

Although we were not able to complete this proof fully (in particular we miss a statement concerning existence and regularity of the solution in \eqref{pajolijoli}), we proved a partial statement that was sufficient for the purpose 
of the proof of Theorem \ref{th:convex}.
Our statement is that, on the macroscopic scale, $\eta(x,t)$ stays close to a deterministic discrete evolution that can be though as
a discretization of \eqref{pajolijoli} \cite[Theorem 3.4]{cf:LST}. We record informally here the result for quotes in the rest of the paper

\begin{theorem}[{\cite[Theorem 3.4 and Corollary 3.5]{cf:LST}}]\label{th:spooney}
In some weak sense  equation \eqref{pajolijoli} describes the evolution of  the rescaled interface on the diffusive time-scale. 
\end{theorem}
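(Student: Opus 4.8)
Since the statement quoted here is deliberately informal, the first step is to pin down the precise claim of \cite[Theorem 3.4 and Corollary 3.5]{cf:LST}. For the auxiliary connectedness-preserving dynamics on $\gL_L$ described above, with an initial interface satisfying $\tfrac1L\eta(Lx,0)\to w_0(x)$ for some Lipschitz $w_0$, one introduces a \emph{deterministic} discrete evolution $\bar\eta(x,t)$, $x\in\{-L,\dots,L\}$, given by the ODE system $\tfrac{d}{dt}\bar\eta(x,t)=\Psi_L\big(\bar\eta(x-1,t),\bar\eta(x,t),\bar\eta(x+1,t)\big)$ with $\bar\eta(\cdot,0)=\eta(\cdot,0)$, where $\Psi_L$ is the expectation of the instantaneous microscopic drift of $\eta_x$ under the product invariant measure of the zero-range process carrying the prescribed local slope. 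The function $\Psi_L$ is a smooth (on scale $1/L$) regularisation of the nonlinear operator $w\mapsto \partial_x^2 w/(1+|\partial_x w|^2)$ of \eqref{pajolijoli}, and the assertion to prove is that $\bbP\big[\sup_{t\le T,\,x}|\tfrac1L\eta(Lx,L^2t)-\tfrac1L\bar\eta(Lx,L^2t)|>\e\big]\to0$ for every $\e,T>0$, from which the corollary reads off the recession speed of a pole of $\mal$.

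The plan is then a quantitative hydrodynamic-limit argument. First I would write, for each site, $\eta_x(t)=\eta_x(0)+\int_0^t (\cL\eta_x)(s)\,ds+M_x(t)$ with $M_x$ a martingale whose bracket is controlled by the (order-one, per macroscopic time unit) total jump rate in a neighbourhood of $x$; after the diffusive rescaling, Doob's inequality makes $M_x$ of size $o(L)$ uniformly in $x$, and the mixed boundary conditions on $\gL_L$ contribute only lower-order terms. The crux is then a \emph{replacement lemma}: the time integral of the microscopic drift of $\eta_x$ must be shown to agree, up to $o(L)$ after rescaling, with the time integral of $\Psi_L$ evaluated on the current profile. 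This is a one-block/two-block estimate in the style of Guo--Papanicolaou--Varadhan: on a mesoscopic window of $\ell$ sites with $1\ll\ell\ll L$, after the relaxation time $O(\ell^2)$ the empirical slope is close to that of the corresponding product invariant measure of the zero-range process, so that the block average of the (bounded but discontinuous) microscopic drift $(\cL\eta_x)$ is close to $\Psi_L$ of the block-averaged slope.

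Granting the replacement lemma, one closes the estimate by a discrete Gronwall inequality, using the Lipschitz stability of the scheme $\bar\eta$ with respect to perturbations of its driving term; this yields closeness at each fixed rescaled time with probability tending to one, and uniformity in $t\in[0,T]$ follows from the equicontinuity in time of both $\tfrac1L\eta(L\cdot,L^2\cdot)$ and $\tfrac1L\bar\eta(L\cdot,L^2\cdot)$, again a consequence of the order-one jump rates. Corollary 3.5 is then obtained by choosing the boundary data so that $\bar\eta$ models a neighbourhood of a pole of $\mal(t)$ and extracting from the scheme the rate at which the apex of the profile descends --- precisely the ingredient unavailable from Theorem \ref{mickey} and needed in the proof of Theorem \ref{th:convex}.

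The hard part will be the replacement lemma in this setting. Two features make it nonstandard: the annihilation of $A$ and $B$ particles --- negligible in the bulk on mesoscopic scales, but the very mechanism by which the interface recedes at a pole, so it cannot simply be discarded --- and the singular dependence of the microscopic drift on the local slope through the factor $1/|\eta_{x+1}-\eta_{x-1}|$, which forces one to work with a non-Lipschitz microscopic current. One needs a quantitative relaxation estimate (spectral gap or logarithmic Sobolev inequality, uniform in the density) for the zero-range process restricted to a block, plus enough quantitative control to produce a with-high-probability \emph{uniform} bound rather than the usual weak hydrodynamic convergence. It is also here that one is compelled to compare with the discrete scheme $\bar\eta$ rather than directly with a solution of \eqref{pajolijoli}, since no existence or regularity theory for that degenerate parabolic equation is presently available --- which is exactly the gap acknowledged before the statement.
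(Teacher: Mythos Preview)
The present paper does not contain a proof of this statement: Theorem~\ref{th:spooney} is recorded here only as an informal summary of \cite[Theorem~3.4 and Corollary~3.5]{cf:LST}, so there is no in-paper argument against which to compare your proposal. What the surrounding text does confirm is the shape of the precise claim --- closeness of the rescaled interface to a deterministic \emph{discrete} scheme regarded as a discretisation of \eqref{pajolijoli}, with no passage to the continuum PDE because existence and regularity for \eqref{pajolijoli} are unavailable --- and you have identified this correctly.

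On the substance of your sketch: the Guo--Papanicolaou--Varadhan replacement-lemma template is a natural first guess for a nonlinear gradient system, but as you yourself flag, several of the steps are genuinely open at the level of detail you give. The upgrade from the weak (empirical-measure) hydrodynamic limit that the standard one-block/two-block machinery delivers to the uniform-in-$x$, with-high-probability statement actually required is a substantial additional input and is not supplied by the outline; the zero-range process in question has jump rate $g(k)$ of order $1/k$, a decreasing-rate regime in which spectral-gap or log-Sobolev constants are typically \emph{not} uniform in the density, so the ``quantitative relaxation estimate'' you invoke cannot be taken off the shelf; and the $A$/$B$ annihilation places the model outside the usual conservative-ZRP framework, so that even the identification of the product invariant measures underlying your $\Psi_L$ requires separate justification. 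Your proposal is thus best read as a credible programme rather than a proof. Whether \cite{cf:LST} actually proceeds along these lines, or instead exploits model-specific monotonicity and coupling arguments --- the Fourier-coefficient concentration used for Theorem~\ref{mickey} suggests the authors favour direct methods over the entropy machinery where possible --- cannot be determined from the present paper alone.
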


\section{Zero magnetic fields: a detailed sketch of proof for Theorem \ref{th:convex}} \label{convex}

We expose in this section the ideas behind the proof of Theorem \ref{th:convex}. 
The first important step is to reduce the proof to an infinitesimal statement.
Using continuity properties of the conjectured scaling limit, we can show that in order to control the evolution of the $+$ 
domain for all time, it is sufficient to control the motion with a first order precision during a small time $\gep$.

\medskip

Once this is done, it suffices to iterate the statement many as much as need (order $\gep^{-1}$) to control the evolution for arbitrary positive time.
We do not develop this point further. 

\medskip

Let us state directly the two infinitesimal statement we want to prove:
the first one concerns continuity of the interface motion (which has to be used to get continuity of the motion),

\begin{proposition}
\label{trucrelouc}
Let $\DD$ be convex and with a Lipschitz curvature function.
For every $\alpha>0$, w.h.p. (recall definition \eqref{lesvoisins})
\begin{equation}\label{tipc}
\mathcal{A}_L(L^2 t) \subset L\, \mathcal{D}^{(\alpha)}\text{\;for every\;} t\ge0.
\end{equation}
Moreover, for every $\alpha>0$ there exists $\gep_1(\alpha,k_{\rm max})>0$ such that  w.h.p 
\begin{equation}\label{topc}
 \mathcal{A}_L(L^2 t) \supset L\,\mathcal{D}^{(-\alpha)} \text{\;for every\;} t\in[0,\gep_1].
\end{equation}
\end{proposition}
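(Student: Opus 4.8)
The plan is to prove Proposition~\ref{trucrelouc} by combining the monotonicity of the dynamics (Section~\ref{grcont}) with the interface scaling results of Section~\ref{TP}, used as local ``barriers'' that sandwich $\mal(L^2t)$ between slightly smaller and slightly larger sets. I first record the two elementary geometric facts I will use: for a convex set $\DD$ with curvature bounded by $k_{\max}$, the inner neighbourhood $\DD^{(-\alpha)}$ satisfies an interior ball condition of radius $\alpha$ at every boundary point, and $\DD^{(\alpha)}$ can be written as an intersection of half-planes whose boundary lines are tangent to $\partial\DD$; equivalently, near any boundary point the set $\DD$ looks, after rescaling, like the epigraph of a $1$-Lipschitz function with small curvature. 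These will let me feed the hypotheses of Theorem~\ref{mickey} (interface version) and Theorem~\ref{th:spooney} (pole version).

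For the upper bound \eqref{tipc}: since $h=0$, the interface is pushed only by curvature, so morally $\mal(t)$ cannot grow; the subtlety is that small fluctuations could push it slightly outside $L\DD$. I would cover $\partial\DD$ by finitely many coordinate patches, and in each patch dominate the true dynamics from outside by an auxiliary dynamics in a box with mixed $\pm$ boundary conditions (as in Theorem~\ref{mickey}) whose initial interface lies just outside $L\,\partial\DD$ — the graph of a $1$-Lipschitz function approximating a tangent segment plus a margin of order $\alpha L$. By monotonicity in boundary/initial conditions (Section~\ref{grcont}), $\mal(L^2t)$ stays, in that patch, on the minus side of the auxiliary interface; Theorem~\ref{mickey} says the latter stays within $o(L)\ll \alpha L$ of the deterministic heat-flow solution, which (being a contraction/sub-solution relative to a flat profile) never moves outward past the $\alpha$-margin. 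Patching the finitely many patches and the pole patches (handled identically via Theorem~\ref{th:spooney}) gives \eqref{tipc} w.h.p., uniformly in $t\ge 0$; uniformity in time is not a problem here because the containment $L\DD^{(\alpha)}$ is a fixed set and the barrier estimates are themselves time-uniform.

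For the lower bound \eqref{topc}: now I dominate from \emph{inside}. Fix $\alpha>0$; I want to show $\mal(L^2t)\supset L\DD^{(-\alpha)}$ for $t\le\gep_1$. The point is that during a short time the interface drifts inward by at most $O(k_{\max})$ per unit rescaled time, so choosing $\gep_1$ small (of order $\alpha/k_{\max}$, up to constants) leaves room. Concretely, at each boundary point of $\DD$ use the interior ball of radius $\alpha$ to place a small ``cap'' which, by monotonicity, lies inside $\mal$; on this cap the dynamics is again an interface or zero-range dynamics (Theorems~\ref{mickey} and \ref{th:spooney}), whose rescaled evolution stays within $o(1)$ of the deterministic curvature-type flow, and that flow moves the boundary inward at speed at most $a(\theta)k_{\max}\le C k_{\max}$; hence for $t\le \gep_1 := c\alpha/k_{\max}$ the minus region still covers $L\DD^{(-\alpha)}$. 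The constants depend only on $\alpha$ and $k_{\max}$, as claimed.

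The main obstacle is the treatment near the poles of $\DD$ — the points with a horizontal or vertical tangent — where the interface is not globally a graph and the set of minus spins can a priori disconnect. This is exactly the difficulty that forces us to invoke the zero-range correspondence of Section~\ref{spoon} and Theorem~\ref{th:spooney} rather than the cleaner Theorem~\ref{mickey}: one must run the modified (connectivity-preserving) dynamics as a comparison process, check via monotonicity that it genuinely bounds the true droplet from the relevant side near the pole, and then quote the (only partial, discrete) scaling statement available there. Stitching the graph patches to the pole patches so that the barriers overlap consistently, and making sure the $o(L)$ error terms from the various comparison theorems are all $\ll \alpha L$ simultaneously, is where the real work lies; everything else is monotonicity plus elementary convex geometry.
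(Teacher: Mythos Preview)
The paper does not actually give a proof of Proposition~\ref{trucrelouc}. It states the result alongside Proposition~\ref{mainpropc} as the two ``infinitesimal statements'', then explicitly writes that ``Proposition~\ref{trucrelouc} is more of a technical detail'' and proceeds to sketch only Proposition~\ref{mainpropc}; the argument for Proposition~\ref{trucrelouc} is implicitly deferred to~\cite{cf:LST}. So there is nothing in the present paper to compare your attempt against.

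That said, your strategy is the natural one and mirrors exactly the machinery the paper deploys for Proposition~\ref{mainpropc}: cover $\partial\DD$ by patches, in each patch replace the true dynamics by an auxiliary interface dynamics via the monotone coupling of Section~\ref{grcont}, and invoke Theorem~\ref{mickey} away from the poles and Theorem~\ref{th:spooney} near them. Two points deserve more care than you give them. First, for~\eqref{tipc} you assert that ``the barrier estimates are themselves time-uniform'', but Theorem~\ref{mickey} as stated only controls $t\in[0,T]$ for fixed $T$; you still need to argue that a finite horizon suffices (e.g.\ via an a~priori bound that $\mal(L^2T_0)=\emptyset$ w.h.p.\ for some large $T_0$, or by observing that for large $t$ both the random interface and the heat solution are uniformly $o(L)$). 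Second, your ``interior-ball cap'' device for~\eqref{topc} is not literally in the setting of either scaling theorem, and you have not said which side the monotonicity goes: you must specify the actual auxiliary box, its mixed $\pm$ boundary condition, and an initial profile dominated by $\mal(0)$, so that the comparison genuinely bounds $\mal$ from \emph{inside} and one of Theorems~\ref{mickey} or~\ref{th:spooney} applies to the auxiliary process.
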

the second one is the control at first order of the evolution,

\begin{proposition}
\label{mainpropc}
  For all $\delta>0$ there exists $\gep_0(\delta,k_{\rm min},k_{\rm max})>0$ such that for all
  $0<\gep<\gep_0$, w.h.p.,
\begin{equation}\label{hokc}
\mathcal{A}_L(L^2 \gep) \subset L\mathcal{D}(\gep(1-\delta)),
\end{equation}
and 
\begin{equation}\label{hicc}
\mathcal{A}_L(L^2 \gep) \supset L\mathcal{D}(\gep(1+\delta))
\end{equation}
where $\DD(t)$ is the solution of $\eqref{eq:meanc}$.
\end{proposition}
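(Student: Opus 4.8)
The plan is to reduce both inclusions in Proposition \ref{mainpropc} to the interface scaling limits of Section \ref{spoon} and Section \ref{asyma} via a local comparison argument, exploiting the monotonicity of the graphical construction from Subsection \ref{grcont}. Fix a small $\gep>0$ and look at the configuration $\mathcal A_L(L^2 t)$ for $t\in[0,\gep]$. By Proposition \ref{trucrelouc} we already know the droplet stays inside $L\,\DD^{(\alpha)}$ for all time and contains $L\,\DD^{(-\alpha)}$ for $t\le \gep_1$, so with high probability the boundary of $\mathcal A_L(L^2 t)$ lies in a thin annulus around $L\,\partial\DD$ throughout $[0,\gep]$. The strategy is then to cover $\partial\DD$ by finitely many arcs, classify each arc as either a ``flat piece'' (where the outward normal stays away from the lattice axes, so locally the interface is the graph of a Lipschitz function in some rotated frame $(\bff_1,\bff_2)$) or a ``pole piece'' (a neighborhood of a point where one coordinate is extremal), and on each arc sandwich the true dynamics between two auxiliary dynamics with suitably chosen flat boundary conditions.

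On a flat arc I would use Theorem \ref{mickey}: by monotonicity the true droplet is squeezed between the dynamics started from an initial interface that is a straight segment tangent to $L\,\partial\DD$ from outside (resp. a segment cutting a chord from inside), with mixed $\pm$ boundary conditions chosen so that the auxiliary interface is a graph in the frame $(0,\bff_1,\bff_2)$. Theorem \ref{mickey} tells us the rescaled auxiliary interface follows the heat equation \eqref{laplace}, and a short computation (projections/trigonometry, exactly the heuristic indicated after Theorem \ref{mickey}) shows that on the diffusive time-scale the local normal displacement of a graph solving the heat equation equals $a(\theta)k(\theta)\gep+o(\gep)$ to first order, matching \eqref{eq:meanc}; choosing the chords tightly and using the regularity of $\DD(t)$ from Theorem \ref{th:deterministico} (curvature Lipschitz and bounded above and below on $[0,\gep_0]$) one gets, for $\gep$ small, that the flat part of $\mathcal A_L(L^2\gep)$ is inside $L\,\DD(\gep(1-\delta))$ and outside $L\,\DD(\gep(1+\delta))$. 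Near a pole the interface is not a graph, so instead I would invoke Theorem \ref{th:spooney} (the zero-range process description, \cite[Theorem 3.4 and Corollary 3.5]{cf:LST}): the same sandwiching with flat boundary conditions works, and the deterministic discrete evolution approximating \eqref{pajolijoli} moves the pole by the correct first-order amount $a(\theta)k(\theta)\gep$ (noting $a$ is continuous across $\theta=i\pi/2$ and $k$ is bounded there by the assumed regularity). Patching the flat and pole estimates along the finite cover, and absorbing the overlaps into the $\delta$-error by taking $\gep_0$ small enough, yields \eqref{hokc} and \eqref{hicc}.

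The main obstacle, and where most of the work lies, is the patching step together with the pole analysis: Theorems \ref{mickey} and \ref{th:spooney} are statements about interfaces with \emph{prescribed, fixed} flat boundary conditions on a box, whereas the real droplet is a closed convex curve with no such boundary — so one must show that cutting $\partial\DD$ into arcs and imposing artificial boundary conditions at the arc endpoints does not corrupt the first-order drift in the interior of each arc on the time-scale $L^2\gep$. This requires a finite-speed-of-propagation (or finite-speed-of-information) estimate for the corner-flip / zero-range dynamics, so that the influence of the artificial boundary only reaches an $O(L\sqrt{\gep})$-neighborhood of the arc endpoints, which can be made negligible relative to the macroscopic arc length by first fixing the cover and then sending $\gep\to0$. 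Controlling this, and making the two-sided comparison consistent across the flat/pole transition (where the ``graph'' frame $(\bff_1,\bff_2)$ has to be continuously rotated), is the delicate point; everything else is the iteration scheme (sketched at the start of Section \ref{convex}) plus the elementary geometry relating the heat equation to \eqref{eq:meanc}.
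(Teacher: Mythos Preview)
Your overall architecture matches the paper's: decompose $\partial\DD$ into arcs away from the poles (regions $B_i$) and arcs around the poles (regions $A_i$), reduce each to an interface dynamics via monotonicity, apply Theorems \ref{mickey} and \ref{th:spooney} respectively, and compare the first-order Taylor expansion in $\gep$ of the limiting PDE with that of the flow \eqref{eq:meanc}. But there is a genuine gap in your choice of auxiliary dynamics: you propose starting the comparison from a \emph{straight segment} (tangent from outside for \eqref{hokc}, chord from inside for \eqref{hicc}). A straight segment has zero second derivative and is therefore stationary under \eqref{laplace}; such a comparison carries no information about the first-order normal displacement $a(\theta)k(\theta)\gep$ you are after, and cannot yield either inclusion.

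What the paper does instead is keep the \emph{actual curved arc} of $\partial\DD$ as the initial interface for the auxiliary dynamics; the only modification is to freeze spins along the sides of a rectangle enclosing the arc (to $-$ on two sides for \eqref{hokc}, so that by monotonicity the auxiliary has more $-$ spins than the original; at the poles the $+$ side is fixed via Proposition \ref{trucrelouc}, then one adds $-$ spins to smooth the interface at the rectangle boundary and suppresses disconnecting updates). This also makes your proposed finite-speed-of-propagation estimate for the discrete corner-flip/zero-range dynamics unnecessary. The paper applies Theorems \ref{mickey} and \ref{th:spooney} on the \emph{full} rectangle, obtaining convergence to the heat equation with Dirichlet boundary data; the corruption from the artificial boundary is then absorbed entirely at the PDE level, via the elementary fact that $u(x,\gep)=u_0(x)+\tfrac{\gep}{2}u_0''(x)+O(\gep^2)$ holds uniformly on any interval compactly contained in the domain regardless of the boundary values (equation \eqref{dlame}). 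Choosing the rectangle strictly larger than the target zone is all that is needed, and only two fixed frames are used, $(\bff_1,\bff_2)$ for the $B_i$ and $(\be_1,\be_2)$ for the $A_i$; no continuous rotation is required.
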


The main work that remains is to prove the two inclusion bounds \eqref{hokc} and \eqref{hicc}, as Proposition \ref{trucrelouc} is more of a technical detail.
We detail  now the sketch for the proof of the upper inclusion \eqref{hokc}. The other bound is proved similarly but for technical reason it is a bit harder to expose
its proof. 

\begin{proof}[Sketch of the proof of \eqref{hokc}]

What we have to show is that after a time $\gep$ all the spins in $\mathcal D \setminus \mathcal D(\gep(1-\delta))$ (on the rescaled picture)
that were initially $-$ have turned $+$ w.h.p.\ after a time $\gep$.

\medskip

To do so we divide $\mathcal D \setminus \mathcal D(\gep(1-\delta))$ in eight regions and in each of these region, we will
try to compare our dynamics with some interface dynamics.
We consider four regions around the poles named $(A_i)_{i=1}^4$ and four others 
named $(B_i)^4_{i=1}$ (see Figure \ref{fig:zoneeight})
and one wants to show that each one of them is filled with $+$ after a time $\gep$. In what follows we will chose the $A_i$ to be very small zones 
around the poles whereas the $B_i$ will cover a proportion of the boundary of $\DD$ close to $1$.

\medskip

Due to rotational symmetries of the problem, the inclusion \eqref{hokc} reduces to proving that w.h.p.\
\begin{equation}\label{cruche}
\begin{split}
 \forall x\in B_1,   \sigma_{x}(\gep L^2)=+1,\\
 \forall x\in A_1,   \sigma_{x}(\gep L^2)=+1.
\end{split}
\end{equation}

The idea behind this division is that initially, $\gamma$ restricted to the region $A_1$ (resp $B_1$) is the graph of a function in the coordinate frame ($\be_1,\be_2$) for $A_1$ and in the frame, 
resp.\ ($\bff_1,\bff_2$). We want to modify our 
dynamics a little bit to get in the context of Theorem \ref{mickey} or Theorem \ref{th:spooney}.

 \begin{figure}[hlt]
\leavevmode
\epsfxsize =10 cm
\psfragscanon
\psfrag{A1}{$A_1$}
\psfrag{A2}{$A_2$}
\psfrag{A3}{$A_3$}
\psfrag{A4}{$A_4$}
\psfrag{B1}{$B_1$}
\psfrag{B2}{$B_2$}
\psfrag{B3}{$B_3$}
\psfrag{B4}{$B_4$}
\psfrag{P1}{$P_1$}
\psfrag{P2}{$P_2$}
\psfrag{P3}{$P_3$}
\psfrag{P4}{$P_4$} 
\psfrag{Dint}{$\DD(\gep(1-\delta))$} 
\epsfbox{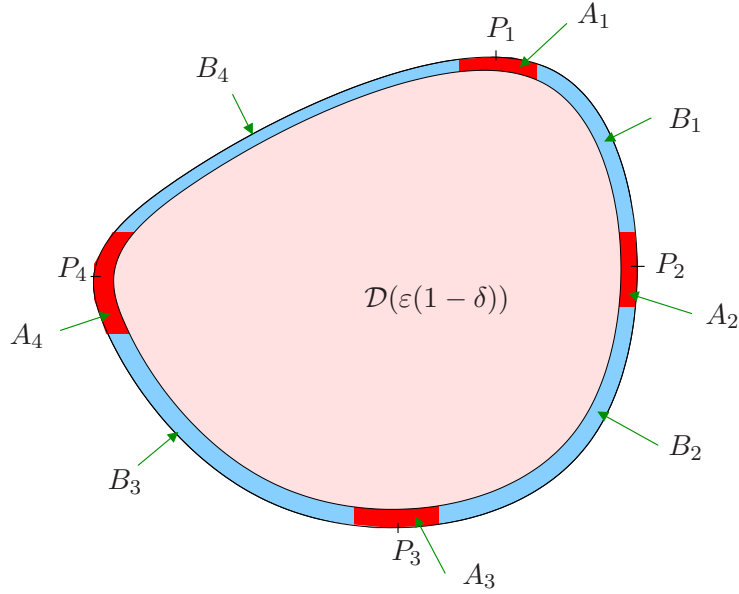}
\begin{center}
  \caption{\label{fig:zoneeight} Decomposition of $\mathcal D \setminus \mathcal D(\gep(1-\delta))$ in eight zone, 
  four small ones around the poles  $P_i$ that we call $A_i$
and four larger one away from the poles, called $B_i$}
\end{center}
\end{figure}

\medskip

The idea to prove each line of \eqref{cruche} is to replace the dynamic $\sigma$ by another one that creates more $-$ spins 
but that we can handle better.

\medskip

Let us start with the case of zone $B_1$. We look at the dynamics restricted to the rectangle of Figure \ref{fig:squarerestrict}, 
and we decide to cancel all the updates that
concerns spins of the down or right side of the rectangle, and remark that the spins on the two opposite sides remain $+$ for all time. 
The obtained dynamics $\sigma^{(1)}$ has more $-$ spins that the original one
(it can be seen from the graphical construction of Section \ref{grcont}) and falls in the setup described in Section \ref{asyma}. More precisely we have a rectangle with mixed boundary 
condition  instead of a 
square, but Theorem \ref{mickey} also 	apply in that case).

\medskip

If $\eta^{(1)}$ denotes the interface function corresponding to $\sigma^{(1)}$ (in the setup of Section \ref{asyma}) 
and $q(\cdot,t)$ denote the function whose graph in the coordinate frame ($\bff_1,\bff_2$) is the boundary of $\mathcal D(t)$ restricted to the zone $B_1$ (we chose $t$ small enough so that $q$ 
remains defined on some ''large interval'' $I$).
The first line of \eqref{cruche} is proved if one can show that 

\begin{equation}
 \frac{1}{L}\eta^{(1)}(Lx,L^2\gep)\le q(x,\gep(1+\delta)), \forall x\in I,
\end{equation}
where $I$ is an interval that is strictly included in the domain of definition $I_0$ of $q(\cdot, t)$ which sufficiently large to 
have control on the whole region $B_1$.

\medskip

Thus using Theorem \ref{mickey}, it is sufficient to prove that 

\begin{equation}\label{cocorico}
 u(x,\gep)< q(x,\gep(1+\delta)), \forall x \in I,
\end{equation}
where $u$ is the solution of equation \eqref{laplace} with initial condition $u_0=q(\cdot,0)$ corresponding to the initial position of the interface.
Continuity properties of the heat equation allows us to say that if the interval $I$ is fixed and $u_0$ is smooth enough, one has uniformly in $x\in I$,

\begin{equation}\label{dlame}
  u(x,\gep)=u_0(x)+\frac{1}{2}\partial_x^2 u_0(x)\gep+O(\gep^2).
\end{equation}
This does not hold uniformly in the full interval $I_0$ because of the Dirichlet boundary condition that makes the drift equal to zero at the extremities of the interval.
The anisotropic motion by curvature is sufficiently regular to obtain something similar for $q(x,\gep)$: uniformly on $x\in I$ 
\begin{equation}\label{dlame2}
 q(x,t)= t\frac 1 2 \partial_x^2 q(x,0)+O(t^2).
\end{equation}
At an informal level this is just a Taylor formula combined with \eqref{eq:meanc} and some trigonometry (for the projections).

\medskip

As curvature remains positive everywhere for all time and $q(x,0)=u(x,0)$, it is straight-forward
to deduce \eqref{cocorico} from \eqref{dlame} and \eqref{dlame2} for small enough $\gep$.

\begin{figure}[hlt]
\leavevmode
\epsfxsize =10 cm
\psfragscanon
\psfrag{Blockedspins}{Spins for which updates are blocked (stay $-$)}
\psfrag{Spinsthatremains+}{Spins that remains $+$ for all time} 
\epsfbox{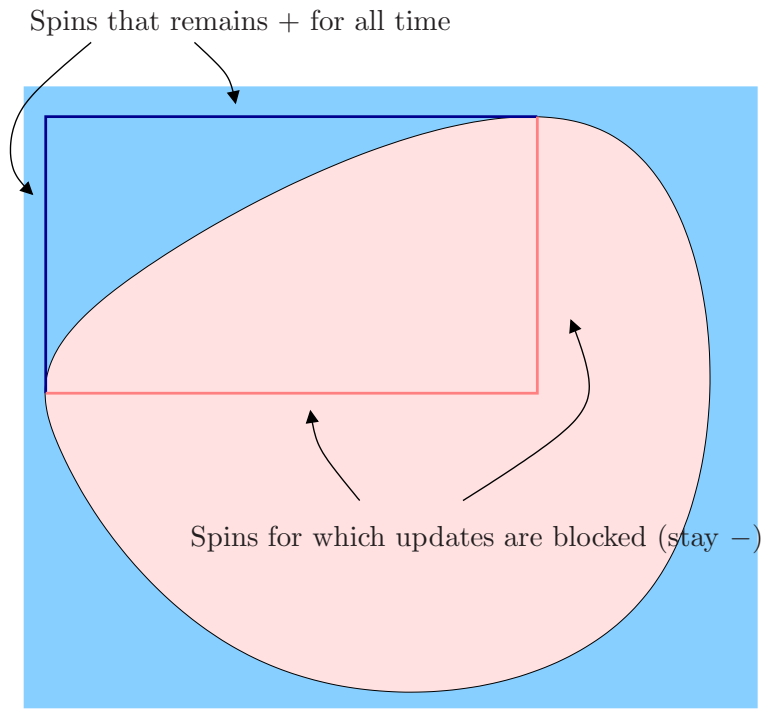}
\begin{center}
  \caption{\label{fig:squarerestrict} In order to control the dynamics on a zone of type $A$, we look at the dynamic restricted to 
a ''quadrant'' of the original shape. In order to cut dependence from what happens outside the rectangle, we block the update on a set of spins
that have to stay $-$ at all time (dark red), and we can do a monotone coupling of this dynamics with the original one. On the two opposite sides of the rectangle (dark blue), even if updates are not rejected, note that 
the spins have to stay $+$ for all time due to the majority rule.}
\end{center}
\end{figure}

\medskip

The treatment of zone a around the pole (say $A_1$) is more delicate but follows the same line. The difference is that we have 
to perform a longer chain of modification of the dynamics. Each modification makes the dynamics have more $-$ spins, or result in a dynamics that coincide with the previous one with large probability.

\medskip

We look at a dynamic restricted to a rectangle around the pole that includes $A_1$, e.g.\ take the rectangle twice as large as $A_1$
(see Figure \ref{fig:mypole}).
We want to modify the dynamics by fixing the boundary condition on the rectangle (or to freeze updates). 
The problem here is that
to get to the setup of interface dynamics of Section \ref{spoon} we need to freeze some sites with  $+$ spins as well as sites with $-$ spins so that the modified dynamics 
does not compare well with the original one.

\medskip

A way to get around this problem is to prove first that sites at a macroscopic distance (i.e.\ positive distance on the rescaled picture) from 
$\mathcal D(0)$ do not change sign ever with large probability (this is for instance the upper inclusion bound in Proposition \ref{trucrelouc}). 

\medskip

Knowing this we can
freeze spins  to $+$ in the upper-part of our rectangle provided it does not touch $\mathcal D(0)$: 
the obtained modified the dynamics will coincide with the original with large probability.

\medskip

Then if we freeze the spins in the lower-part of the rectangle to $-$ we get a dynamics that compares well with the previous one (it has more $-$ spins, see upper part of Figure
\ref{fig:mypole} upper part). 

\medskip

Once the whole boundary has been frozen, we add $-$ spins to our initial condition so that on the macroscopic scale, our initial condition is a smooth interface: 
we cancel the irregularity at the boundary (see figure \ref{fig:mypole} lower part).

\medskip

The dynamics restricted to the rectangle is not exactly an interface dynamics so we have to modify it once again. We cancel all the moves that 
break the interface between $+$ and $-$ into several connected components. With our modified initial condition (an interface that has a unique local maximum) a disconnection of the interface 
can only occur by adding a $+$ spins so the modified dynamics has one again more $-$ spins than the original one.

\medskip

Let $\sigma^{(2)}$ denote the last mentioned modified dynamics and $\eta^{(2)}(x,t)$ denote the interface function that corresponds to it. 
Let $r$ denote the function whose graph in ($\be_1,\be_2$) is the boundary of $\mathcal D(t)$ in the zone $A_1$.

We are left with proving that 

\begin{equation}\label{wister}
\frac{1}{L}\eta^{(2)}(Lx,L^2\gep)\le r(x,\gep(1+\delta)), \forall x\in J,
\end{equation}
for a small interval $J$ around the pole.

Using Theorem \ref{th:spooney}, one gets that for small $\alpha>0$, for any positive $t$ w.h.p.\ 
\begin{equation}
\frac{1}{L}\eta^{(2)}(Lx, L^2t)\le w(x,(1+\alpha)t),
\end{equation}
where $w$ is the solution of $\partial_t w=(1/2)\partial_x^2 w$ with Dirichlet boundary condition, and initial condition given by the the interface.
The number $\alpha$ can be taken arbitrarily small by reducing the size of the zone around the pole that is considered.

\medskip

With this information, proving \eqref{wister} reduces to show that
\begin{equation}\label{cpatrodur}
w(x,(1+\alpha) \gep)<  r(x,\gep(1+\delta)) \quad \forall x \in J,
\end{equation}
where $I$ is an interval corresponding to the zone $A_1$. Equation \eqref{cpatrodur} is proved in the same manner that 
we proved \eqref{cocorico}, we choose $\alpha=\delta/2$, and perform Taylor expansion at first order on both side.

\medskip

\begin{equation}\label{wescd}\begin{split}
w(x,(1+\delta/2) \gep)=w(x,0)+\gep(1+\delta/2)/2(\partial_x)^2 w(x,0)+O(\gep^2),\\
r(x,\gep(1+\delta))=r(x,0)+\gep(1+\delta)\frac{(\partial_x)^2 r(x,0)}{(1+|\partial_x r(x,0)|)^2}+O(\gep^2),
\end{split}\end{equation}
 Again the second line is formally \eqref{eq:meanc} and some trigonometry.
Then using the fact that $w(\cdot,0)=r(\cdot, 0)$ on an interval around the pole and that $|\partial_x r(x,0)|$ is uniformly small
on $J$ if $J$ is chosen small enough, we can deduce \eqref{cpatrodur} for \eqref{wescd} for $\gep$ small enough, with a small interval $J$.

 \begin{figure}[hlt]
\leavevmode
\begin{flushleft}
\epsfxsize =13 cm
\psfragscanon
\psfrag{-spins}{Spins for which updates are blocked (stay $-$)}
\psfrag{+spins}{Spins that remains $+$ for all time with large probability}
\psfrag{smoothbound}{adding $-$ spins to get a smooth boundary condition on the rescaled picture}
\epsfbox{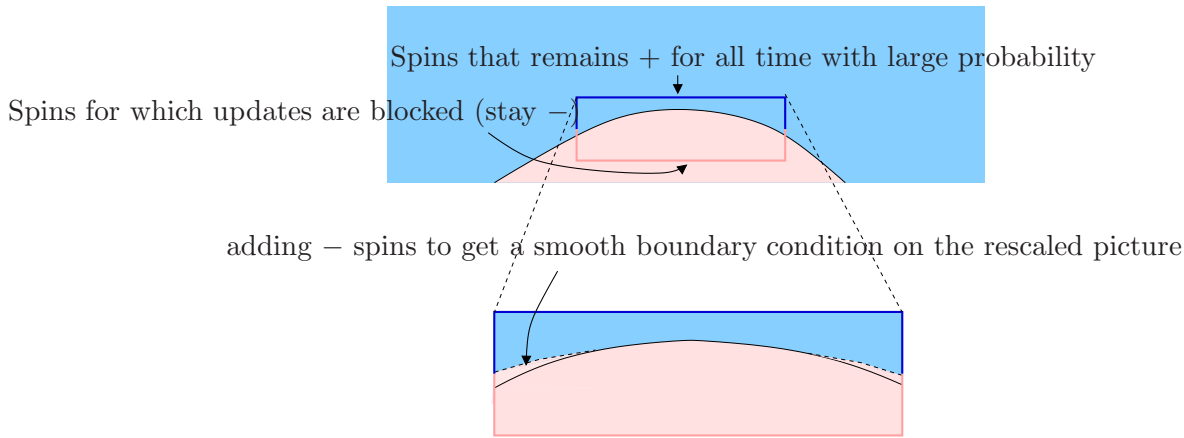}
\end{flushleft}
\begin{center}
  \caption{\label{fig:mypole} In order to control the dynamics on a zone of type $B$, we look at the dynamic restricted to 
a rectangle around the pole. In order to cut dependence from what happens outside the rectangle, we block the update on a set of spins
that have to stay $-$ at all time (dark red). On the rest of the boundary we note that 
the spins have to stay $+$ for all time with large probability so that we can couple the dynamics that reject updates on 
those site so that it coincides with the 
original dynamics with large probability.}
\end{center}
\end{figure}

Proving the other inclusion relies on the same ingredients \eqref{hicc}: first separate different zones that one wants to control, 
and then, for each zone reduce to an interface dynamics via a chain of modification.
After doing this  we use the scaling results Theorem \ref{mickey} and \ref{th:spooney} to conclude.
 Then one derives the result for the original dynamics using monotonicity.
The chain of dynamic modification is a bit longer and tedious in this case that for the upper-bound \eqref{hokc}.

\end{proof}

\section{Positive magnetic fields: proof of Theorem \ref{resap}}\label{asder}

The proof of the scaling limit of the $-$ droplet in this case uses several techniques in common with the zero-magnetic field case, but
several features of the model makes it much simpler in a sense:
\begin{itemize}
 \item The scaling limit, although being non-smooth, is a much simpler object than anisotropic motion by curvature. In particular there is no 
instantaneous travel of information on the rescaled picture: if one modifies one side of the droplet, it will take a positive time 
to have an effect on regions that are at a positive distance of where the modification took place.
 \item $+$ spins remain $+$ for all times, making the set of $-$ a decreasing set in time.
\end{itemize}

Recall that the aim of this Section is to prove Theorem \ref{resap} in the special case where $\sigma(0)$ is equal to $-$
in $[-L,L]^2\cap (\bbZ^*)^2$ and $+$ outside of it.
Recall that in that case $\DD(t)=\cap_{i=1}^4 \DD_i(t)$ where $\DD_i(t)$ is defined in equation \eqref{dsadsa}.

\medskip

The upper inclusion $\frac{1}{L}\mal(t)\subset \DD^{(\delta)}(t)$ is an easy consequence of Theorem \ref{rost}: the idea is to couple 
the dynamics with four corner growth dynamics using the same clock process for updates.
What can be rather surprising is that this simple method gives a sharp bound. 

\medskip

Indeed for the model based on simple exclusion, one can get an upper bound in
the same manner by comparing with four corner growth dynamics, 
replacing $g(x,t)$ by the solution of the heat equation with initial condition $|x|$ (and time scaling $L$ by $L^2$), but this 
would not be sharp. Indeed the shape obtained as an upper-bound with such 	a technique presents some angles on points with extremal coordinate (poles),
whereas the true scaling limit is smooth (at least $C^2$).
The reason is that the drift of the interface at the pole is positive when the pole is convex, and one needs to apply Theorem \ref{th:spooney} to take the drift 
into account.

\medskip

Here on the contrary, 
nothing happens around the poles due to the singularities of the function $b(\theta)$ (recall \eqref{beteta}) 
that is equal to zero for $\theta=i\pi/2$, $i=1,\dots,4$. 
The main thing to prove to get the lower bound $\frac{1}{L}\mal(t)\supset \DD^{(-\delta)}(t)$
is to show that the interaction between the four quadrants of our shape is quite limited.

\medskip

It is quite difficult to control directly what happens around the pole when the interface is not completely flat,
but one finds a way to bypass this problem.
Similarly to what is done in \cite{cf:Lpin} in the case of dynamics for polymer with an attractive substrate, 
one adds, in a quite artificial manner, a flat part of interfacer around the pole, and modify a bit the statement that has
to be proved (see Proposition \ref{th:bigtime} below). Adding this flat part makes the evolution of the four corners almost independent of
one another for some positive time, and thus allows to use Theorem \ref{visvis} to control the evolution of each corner and thus 
of the total shape.

\subsection{The upper-bound}

The upper-inclusion of Theorem \ref{resap} follows quite easily from Theorem \ref{rost}. We prove it in this section.
\begin{proposition}
For any $\delta>0$, for the dynamic with $h=\infty$ starting from the initial condition  $-$
in $[-L,L]^2\cap (\bbZ^*)^2$ and $+$ outside of it, 
one has w.h.p.\ 
\begin{equation}
 (\frac{1}{L}\mal(Lt))_{t\ge 0}\subset  \mathcal D^{(\delta)}(t), \forall t\ge 0,
\end{equation}
where $\mathcal D(t)$ is defined by \eqref{dsadsa}-\eqref{dsadsa2}.
\end{proposition}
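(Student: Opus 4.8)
The plan is to obtain the upper inclusion by monotone coupling with four copies of Rost's corner-growth dynamics. First I would set up, on the same probability space as the original dynamics $\sigma$ (using the graphical construction of Section \ref{grcont}), four auxiliary dynamics $\sigma^{(i)}$, $i=1,\dots,4$, each of which is the zero-temperature $h=\infty$ dynamics on all of $(\bbZ^*)^2$ started from the configuration that is $-$ on a single quadrant: $\sigma^{(1)}(0)$ is $-$ on $\{x : x_1\le L,\ x_2\le L\}$ (the quadrant $L\mathcal D_1(0)$ in the notation above equation \eqref{dsadsa}), and $\sigma^{(2)},\dots,\sigma^{(4)}$ are the images of $\sigma^{(1)}$ under the rotations by $(i-1)\pi/2$. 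Since $\sigma(0)$, which is $-$ on $[-L,L]^2\cap(\bbZ^*)^2$ and $+$ elsewhere, satisfies $\sigma(0)\ge\sigma^{(i)}(0)$ for each $i$ (the square is contained in each quadrant, so has fewer $-$ spins), the monotonicity of the graphical coupling gives $\sigma(t)\ge\sigma^{(i)}(t)$ for all $t\ge 0$ and all $i$. In terms of $-$-droplets this reads $\mal(t)\subset \mathcal A^-(\sigma^{(i)}(t))$ for each $i$, hence $\mal(t)\subset\bigcap_{i=1}^4\mathcal A^-(\sigma^{(i)}(t))$.

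Next I would identify the scaling limit of each $\mathcal A^-(\sigma^{(i)}(t))$. For $i=1$, $\sigma^{(1)}$ is, up to a reflection bringing the quadrant into the form treated in Theorem \ref{rost} and a translation by $(L,L)$, exactly the corner-growth process with $\eta(x,0)=|x|$ in the $(0,\bff_1,\bff_2)$ frame; Theorem \ref{rost} then gives that, w.h.p., $\max_{x,\,t\le T}|\tfrac1L\eta^{(1)}(Lx,Lt)-g(x,t)|\le \gep$. Translating back, this says that w.h.p.\ $\tfrac1L\mathcal A^-(\sigma^{(1)}(Lt))\subset \mathcal D_1^{(\delta)}(t)$ uniformly for $t\le T$, where $\mathcal D_1(t)$ is the epigraph defined in \eqref{dsadsa}; the $\delta$-fattening absorbs the $\gep$-error in the interface position (choosing $\gep$ small compared to $\delta$) as well as the discretization by unit squares $\mathcal C_x$. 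By rotational symmetry the same holds for $i=2,3,4$ with $\mathcal D_i$ in place of $\mathcal D_1$. Intersecting the four inclusions and using $\bigcap_i \mathcal D_i^{(\delta)}(t)\subset\bigl(\bigcap_i\mathcal D_i(t)\bigr)^{(\delta)}=\mathcal D^{(\delta)}(t)$ (the outer $\delta$-neighborhood of an intersection contains the intersection of the outer $\delta$-neighborhoods — actually one wants the reverse, so I would instead just note $\mathcal A^-(\sigma^{(i)}(Lt))\subset L\mathcal D_i^{(\delta)}(t)$ for every $i$ implies $\mal(Lt)\subset L\bigcap_i\mathcal D_i^{(\delta)}(t)\subset L\bigl(\bigcap_i\mathcal D_i(t)\bigr)^{(\delta)}=L\mathcal D^{(\delta)}(t)$, since $x$ within distance $\delta$ of each $\mathcal D_i$ that happens to lie in all $\mathcal D_i^{(\delta)}$ need not be within $\delta$ of the intersection — this is the one genuine subtlety) yields the claim on $[0,T]$.

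Finally I would upgrade from "$t\le T$" to "for all $t\ge 0$": since $\mathcal D(t)$ shrinks and vanishes in finite time $t_f$, and since for $h=\infty$ the $-$ droplet $\mal$ is monotone decreasing in time (a $+$ spin never flips back, as noted at the start of Section \ref{asder}), it suffices to take $T = t_f+\delta$ and observe that the Rost bound at time $t=t_f+\delta$ already forces $\mal(L(t_f+\delta))$ to lie in $L\mathcal D^{(\delta)}(t_f+\delta)$, which is contained in a $\delta$-ball about the final point $\mathbf x_f$; by monotonicity $\mal(Lt)\subset\mal(L(t_f+\delta))$ for all larger $t$, and $\mathcal D^{(\delta)}(t)$ still contains this small ball for all $t$ (or one simply invokes that $\DD^{(\delta)}(t)\supset B(\mathbf x_f,\delta')$ for $t$ near $t_f$). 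The one point that genuinely requires care — and which I would flag as the main obstacle — is the geometric lemma that the intersection of the four w.h.p.\ inclusions $\mathcal A^-(\sigma^{(i)})\subset L\mathcal D_i^{(\delta)}$ really does give $\mal\subset L\mathcal D^{(\delta)}$ with a controlled (at most $O(\delta)$) loss: one must check that a point lying within Euclidean distance $\delta$ of each of the four epigraphs $\mathcal D_i(t)$ lies within distance $C\delta$ of their intersection $\mathcal D(t)$, which uses the fact (from the explicit description \eqref{defj}--\eqref{dsadsa2}) that near any boundary point of $\mathcal D(t)$ at most two of the constraints are active and their normals are not nearly antiparallel, so the feasible region is not "thin". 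This is elementary convex geometry but should be stated as a small lemma rather than left implicit.
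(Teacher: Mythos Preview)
Your approach is exactly the paper's: couple with four quadrant (Rost) dynamics via the graphical construction, use monotonicity to get $\mal(t)\subset\bigcap_i\mal^i(t)$, apply Theorem~\ref{rost} to each corner, and intersect. The paper's proof is in fact terser than yours: it defines the shifted epigraphs $A^\delta_i(t)$ and simply asserts $\bigcap_i A^\delta_i(t)\subset\DD^{(\delta)}(t)$ and the uniformity in $t$, without flagging either the geometric point about intersections of $\delta$-fattenings or the passage from $t\le T$ to all $t\ge 0$.

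The one thing to dial back is the weight you put on the ``geometric lemma''. It is a genuine point, but not a real obstacle here: since $g(\cdot,t)$ is $1$-Lipschitz in the $(\bff_1,\bff_2)$ frame, the outward normals to $\partial\DD_i(t)$ lie in a quarter-circle of directions, and adjacent $\DD_i$'s (rotated by $\pi/2$) therefore meet at angles bounded uniformly away from $0$ and $\pi$ (in fact by $\pi/2$). This gives $\bigcap_i\DD_i^{(\delta)}(t)\subset\DD^{(C\delta)}(t)$ with an absolute constant $C$ uniformly in $t$, and since $\delta>0$ is arbitrary the constant is harmless. The paper does not spell this out either; your instinct to state it as a small lemma is sound, but it is a one-line remark rather than a ``main obstacle''.
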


\begin{proof}

Using the graphical construction of Section \ref{grcont}, we can couple the dynamics $\sigma(t)$ with initial condition $\sigma_0$: 
$-$ in $[-L,L]^2$ (we drop intersection with $(\bbZ^*)^2$ in the notation for conciseness) and $+$ elsewhere with other dynamics using 
the same clock process:
\begin{itemize}
 \item The dynamics $\sigma_1$ with initial condition $-$ in $[-L,\infty)^2$ and $+$ elsewhere.
 \item The dynamics $\sigma_2$ with initial condition $-$ in $[-L,\infty)\times[-\infty,L)$ and $+$ elsewhere.
 \item The dynamics $\sigma_3$ with initial condition $-$ in $[-\infty,L)^2$ and $+$ elsewhere.
 \item The dynamics $\sigma_4$ with initial condition $-$ in $[-\infty,L)\times[-L,\infty)$ and $+$ elsewhere.
\end{itemize}

We define $\mal^1(t)$, $\mal^2(t)$, $\mal^3(t)$, $\mal^4(t)$ analogously to $\mal(t)$ of \eqref{mal} for these four dynamics.
According to monotonicity properties of the dynamics in the initial condition one has $ \mal(t)\subset  \mal^i(t)$ for all $i\in[1,4]$ so that:
\begin{equation}\label{weirdos}
 \mal(t)\subset  \mal^1(t)\cap \mal^2(t)\cap \mal^3(t)\cap\mal^4(t).
\end{equation}
The dynamics $\sigma^1$ is the same as the one considered in
Theorem \ref{rost} with a space shift of the initial condition, and thus by Theorem \ref{rost},
\begin{equation}
 \mal^1(t)\subset \{ x\bff_1+y\bff_2 \ | \ y \ge g(x,t)-2-\delta  \}=: A^\delta_1(t).
\end{equation}
Defining $A^\delta_i(t)$, $i=2,3,4$ as rotations of $A^\delta_1(t)$ by angle $(i-1)\pi/2$, one gets analogous inclusion for $\mal^i(t)$ by symetry, 
and using \eqref{weirdos} we get that w.h.p.\

\begin{equation}
  \mal(t)\subset  A^{\delta}_1(t)\cap A^{\delta}_2(t)\cap A^{\delta}_3(t)\cap A^{\delta}_4(t)\subset \DD^{(\delta)}(t).
\end{equation}

\end{proof}

\subsection{The lower bound}

The aim of this section is to prove the other inclusion of Theorem \ref{resap}

\begin{proposition}\label{grox}
For any $\delta>0$, for the dynamic with $h=\infty$ starting from the initial condition  $-$
in $[-L,L]^2\cap (\bbZ^*)^2$ and $+$ outside of it, 
one has w.h.p.\ 
\begin{equation}
 (\frac{1}{L}\mal(Lt))_{t\ge 0}\supset  \mathcal D^{(-\delta)}(t), \quad \forall t\ge 0,
\end{equation}
where $\mathcal D(t)$ is defined by \eqref{dsadsa}-\eqref{dsadsa2}.
\end{proposition}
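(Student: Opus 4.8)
The plan is to deduce Proposition~\ref{grox} from an infinitesimal statement, just as Proposition~\ref{mainpropc} is used for the zero-field case: it is enough to show that for every small $\gep>0$, w.h.p., $\frac1L\mal(L\gep)$ contains a slightly shrunk copy of $\DD(\gep)$. Since the flow $(\DD(t))_{t\ge 0}$ defined by \eqref{mosdef} is Lipschitz in $t$ for the Hausdorff distance — its support function is $h(\theta)-b(\theta)t$ — one can iterate this $O(\gep^{-1})$ times and then let $L\to\infty$ and $\gep\to 0$ to obtain the claim for all $t\ge 0$. For the infinitesimal step I would first reduce, using the rotational symmetry of the square, the identity $\DD(t)=\bigcap_{i=1}^4\DD_i(t)$, and the elementary monotonicity $C\subseteq C'\Rightarrow C^{(-\delta)}\subseteq (C')^{(-\delta)}$ of inner neighbourhoods, to a \emph{local} lower bound on $\frac1L\mal(L\gep)$ near each of the four ``poles'' of $\DD(t)$ (the rounded corners, where a single $\DD_i$ is the binding constraint), plus a trivial statement along the four flat sides of $\DD(t)$.

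\textbf{Flat sides and box restriction.} Along a flat side of $\DD(t)$ the outer normal is axial, so $b(\theta)=0$ and nothing moves: the boundary column (or row) of $-$ spins of the initial square has three $-$ neighbours and one $+$ neighbour, hence is frozen until the erosion coming from the adjacent corners reaches it, and the fact that this does not happen instantaneously on the rescaled picture is exactly the no-instantaneous-propagation feature quoted at the start of this section, itself a consequence of a finite-speed-of-propagation estimate. For a pole, say the one near $(-1,-1)$ governed by $\DD_1$, I would restrict the dynamics to a fixed box $B=L\cdot[-1,c]^2$, with $c\in(0,1)$ chosen so that $B$ contains no other corner of the square, and impose the $+$ boundary condition on all of $\partial B$. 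Since $+$ spins never flip, this only adds $+$ spins, so by the monotonicity of the graphical construction of Section~\ref{grcont} the $-$ droplet $\tilde{\mathcal A}^-_L$ of the restricted dynamics $\tilde\sigma$ satisfies $\tilde{\mathcal A}^-_L(t)\subseteq\mal(t)\cap B$, and it suffices to bound $\tilde{\mathcal A}^-_L$ from below near the pole. Inside $B$ the full-$-$ configuration is eroded only from the corners of $B$ (a corner spin sees two $+$ and two $-$ neighbours, hence flips to $+$), so near the pole $\tilde\sigma$ is a genuine corner-growth dynamics, and by finite speed of propagation it agrees, w.h.p., over a short rescaled time window and in a neighbourhood of the pole, with the corner-growth dynamics of Theorem~\ref{rost}.

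\textbf{Decoupling and the scaling limit.} The pole still feels the two adjacent corners of $B$ through the flat side joining them; to make its erosion honestly one-dimensional for a definite time I would, following the device of \cite{cf:Lpin} announced before Proposition~\ref{th:bigtime}, artificially insert a flat segment of interface of rescaled length of order $\gep$ on either side of the pole — equivalently, cut off two small corners of the initial $-$ region, which again only removes $-$ spins and so keeps the modified droplet inside $\mal$. In the particle picture of Sections~\ref{prtsys}--\ref{symasym} such a segment is a block of occupied (resp.\ empty) sites; it is consumed only from its ends and at bounded speed, hence survives for a time $\gg\gep$ and, during the step, shields the pole's erosion from the neighbouring corners. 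The pole is then a bona fide corner-growth interface dynamics with a $1$-Lipschitz initial profile, so Theorem~\ref{rost} (or Theorem~\ref{visvis}, once corner-cutting has deformed the profile) applies and shows that near the pole $\frac1L\tilde{\mathcal A}^-_L(L\gep)$ converges in probability to the region bounded by the viscosity solution $u(x,\gep)=\inf_y\{u_0(y)+g(x-y,\gep)\}$ of \eqref{viscous}; a short projection-and-trigonometry computation identifies this with the relevant arc of $\partial\DD_1(\gep)$, up to an $O(\gep)$ error from the inserted flat pieces that is negligible compared with $\delta$. Collecting the four poles and the four flat sides gives the infinitesimal lower bound, which is then iterated as above (this is the content of Proposition~\ref{th:bigtime}).

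\textbf{Main obstacle.} The real difficulty is making the decoupling quantitative and chaining the modifications cleanly: at each of the $O(\gep^{-1})$ steps one must verify that the inserted flat block genuinely insulates the pole for the whole step — i.e.\ that neither its (finite-speed) erosion nor the microscopic fluctuations it hides cross it before time $\gep$ — that every modification is either monotone in the direction needed for a lower bound or coincides with the true dynamics with probability tending to one, and that the accumulated error from corner-cutting together with the $o_L(1)$ in Theorems~\ref{rost}--\ref{visvis} stays below $\delta$ uniformly over the iteration, the last point being absorbed by the Lipschitz continuity of $t\mapsto\DD(t)$. As in the zero-field case this is careful bookkeeping around a chain of dynamics modifications rather than a new idea, but it is where all the work is.
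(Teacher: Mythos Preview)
Your strategy --- decouple the four corner erosions via flat buffers, invoke Theorem~\ref{rost}/Theorem~\ref{visvis}, then iterate --- is the paper's, but the implementation differs in two places.

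The paper never restricts to a finite box. It couples the full dynamics with four \emph{infinite} quarter-plane dynamics $\sigma^i$ and uses that, up to the first time $\tau$ at which a site on the axial cross $\mathcal K_L=\{x\in\gL_L:\min(|x_1|,|x_2|)=1/2\}$ flips, one has the \emph{exact equality} $\mal(t)=\bigcap_{i=1}^4\mal^i(t)$: once the four quadrants evolve independently, their intersection \emph{is} the original droplet, not merely a superset. The lower bound on $\mal$ then follows directly from lower bounds on each $\mal^i$. Your box restriction $\tilde{\mathcal A}^-_L\subseteq\mal$ is oriented correctly for a lower bound, but the $+$ boundary on the two interior sides of $B$ manufactures three spurious corners whose erosion you would then have to cover by overlapping the four boxes; the equality-up-to-$\tau$ device avoids this entirely.

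Second, a terminological mismatch: in the paper the ``poles'' are the points of extremal coordinate (outward normal axial, $b(\theta)=0$), not the rounded corners you call poles. The flat buffers are inserted at \emph{those} points --- they are what separates two adjacent corner erosions --- by setting $\bar\DD(t)=\DD(t)\cap[-r(t),r(t)]^2$ for a suitable $r(t)\in(d(t)-\delta,d(t))$. The paper also exploits the square initial condition to split the time axis: for $t\le 1-\delta$ the sides of $[-1,1]^2$ are already buffers of rescaled length $2$, and in the particle picture the rightmost particle needs time $L(1+o(1))$ to reach $\mathcal K_L$, so $\tau\ge(1-\delta)L$ w.h.p.\ and a single application of Theorem~\ref{rost} handles the whole range (Lemma~\ref{th:smalltime}). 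The $\gep$-step induction you outline only begins at $t=1-\delta$ (Lemma~\ref{th:bigtime}), with fixed buffer length $\sim\delta$ surviving any step of size $\gep\le\delta/4$; Theorem~\ref{visvis} with initial profile $\bar g(\cdot,1+\gep k)$ then closes the induction.
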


Let us  explain how the proof goes.
Consider  the sets  $\mal^i(t)$  defined in the previous Section.
The idea in our proof of the lower bound it to show that the inclusion used in 
\eqref{weirdos} is almost an equality.
We will decompose the proof in two steps:
\begin{itemize}
 \item First, we prove that when the rescaled-time $t$ is smaller than one the inclusion \eqref{weirdos} is indeed an
equality with large probability.
 \item Second, when the rescaled time $t$ is larger than one, we use a special strategy involving adding portions of straight-line on the interface around the pole 
in order to reduce oneself to a case where one can treat the four corner dynamics independently.  
\end{itemize}

\medskip

The control of the dynamics for times $t\le 1$ is summarized in the following lemma

\begin{lemma}\label{th:smalltime}
For any $\gd\le 0$ one has with high probability, for all $t\le 1-\delta$ one has 
\begin{equation}
 \mathcal D_t^{(-\delta)}\subset \frac{1}{L}\mal(Lt).
\end{equation}
\end{lemma}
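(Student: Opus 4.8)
The upper inclusion being already established, it remains to show $\DD(t)^{(-\gd)}\subset\tfrac1L\mal(Lt)$ simultaneously for all $t\le 1-\gd$. I first reduce this to fixed time: since $+$ spins stay $+$, both $t\mapsto\mal(Lt)$ and $t\mapsto\DD(t)$ are non-increasing for inclusion, and (as $b(\cdot)$ is bounded) the convex compact sets $\DD(t)$ depend Lipschitz-continuously on $t$ in Hausdorff distance; hence, by a union bound over a mesh-$\gd^{2}$ grid of times in $[0,1-\gd/2]$, it suffices to prove that for each \emph{fixed} $t\le 1-\gd/2$, w.h.p.\ $\DD(t)^{(-\gd/2)}\subset\tfrac1L\mal(Lt)$. (Given $x/L\in\DD(t)^{(-\gd)}$ with $t\le 1-\gd$, take the smallest grid point $s_k\ge t$; convexity and Lipschitz-continuity give $\DD(t)^{(-\gd)}\subset\DD(s_k)^{(-\gd/2)}$, so $x\in\mal(Ls_k)\subset\mal(Lt)$.) So fix $t\le 1-\gd/2$, let $P_1,\dots,P_4$ be the four corners of $[-1,1]^{2}$, and recall from \eqref{defj}--\eqref{dsadsa2} that $\DD(t)=[-1,1]^{2}\setminus\bigcup_i\mathcal R_i(t)$, where $\mathcal R_i(t):=[-1,1]^{2}\setminus\DD_i(t)$ is the ``cap'' bitten off near $P_i$; a direct computation with $g$ yields the key geometric fact $\mathcal R_i(t)\subset B(P_i,t)$.

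The point is that for $t<1$ the four corners evolve independently, each as Rost's corner-growth dynamics. Let $\sigma_i$ be the quadrant dynamics of the previous subsection ($-$ on the quadrant with convex corner at $P_i$), coupled to $\sigma$ with the same clocks, and let $D_i(s):=\{x:\sigma_x(Ls)\ne(\sigma_i)_x(Ls)\}$ be the discrepancy set. First I claim a finite-speed-of-propagation bound: for any fixed $\e>0$, w.h.p.\ $D_i(s)\subset D_i(0)\oplus B(0,(1+\e)sL)$ for all $s\le t$. Indeed $D_i$ can grow at a site $x$ only when $x$'s clock rings while a neighbour of $x$ already lies in $D_i$ (the update at $x$ only sees $x$'s neighbours); hence a site at graph distance $d$ from $D_i(0)$ can join $D_i$ only after a time stochastically larger than a $\mathrm{Gamma}(d,1)$ variable, and a Cram\'er bound for $P[\mathrm{Gamma}(d,1)\le T]$ with $T=Ls$, $d=(1+\e)sL$, summed over the $O(L^{2})$ sites and the $O(L)$ grid times, tends to $0$. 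Since $D_i(0)$ — the disagreement set of the two initial conditions — lies at rescaled distance $\ge 2$ from $P_i$, this gives that w.h.p.\ $\sigma(Ls)=\sigma_i(Ls)$ on the ball $B\big(P_i,(2-(1+\e)t)L\big)$ for every $s\le t$. Now fix $\e=\gd/2$; then for all $s\le t\le 1-\gd/2$ one checks that $s<2-(1+\e)t$, so $\mathcal R_i(s)\subset B(P_i,sL)$ sits comfortably inside the coincidence ball. This is the only place where the threshold $t=1$ is used.

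By Theorem~\ref{rost} (equivalently Theorem~\ref{visvis}) applied to $\sigma_i$, whose set of $-$ spins is an increasing set, w.h.p.\ $\tfrac1L\mathcal A^{-}(\sigma_i(Ls))$ is within Hausdorff distance $\gd/4$ of $\DD_i(s)$ for all $s\le t$. Combined with the coincidence, this shows that inside $B\big(P_i,(2-(1+\e)t)L\big)$ the $+$ spins of $\sigma$ are exactly those of $\sigma_i$ and are confined to $L\,\mathcal R_i(t)^{(\gd/4)}$. It remains to exclude $+$ spins of $\sigma$ in the central set $K:=[-L,L]^{2}\setminus\bigcup_i B\big(P_i,(2-(1+\e)t)L\big)$. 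With $\e=\gd/2$ one has $2-(1+\e)t>1$, so $K$ lies at distance $>1$ from $\partial[-L,L]^{2}$, and any neighbour of a point of $K$ lies either again in $K$ or in an annulus $B\big(P_i,(2-(1+\e)t)L\big)\setminus B(P_i,tL)$; in that annulus $\sigma=\sigma_i$ while the cap $\mathcal R_i(s)\subset B(P_i,tL)$ does not reach, so (by the previous sentence) all spins of $\sigma_i$ there are $-$. Hence if $s^{*}$ were the first time a spin $z\in K$ turned $+$, at time $s^{*-}$ all neighbours of $z$ would still be $-$ and $z$ could not turn $+$ — a contradiction; so $K$ never acquires a $+$ spin. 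Therefore $\mathcal A^{+}(\sigma(Lt))\cap[-L,L]^{2}\subset\bigcup_i L\,\mathcal R_i(t)^{(\gd/4)}$, and since $\mathcal A^{-}(\sigma(Lt))\subset[-L,L]^{2}$ while $\DD(t)^{(-\gd/2)}=\big([-1,1]^{2}\setminus\bigcup_i\mathcal R_i(t)\big)^{(-\gd/2)}$ is contained in $[-1,1]^{2}$ and disjoint from $\bigcup_i\mathcal R_i(t)^{(\gd/4)}$, we conclude $\DD(t)^{(-\gd/2)}\subset\tfrac1L\mal(Lt)$.

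The step I expect to be the genuine obstacle is the quantitative finite-speed estimate: one needs to realize the propagation speed $1+\e$ with $\e$ as small as wished (uniformly in $L$), so that the cap $\mathcal R_i(t)\subset B(P_i,tL)$ still fits inside $B\big(P_i,(2-(1+\e)t)L\big)$ for every $t<1$. Everything else is bookkeeping with monotonicity, the elementary geometry of $\DD(t)$, and Rost's scaling limit. (For the range $t>1$, where this decoupling breaks down, one uses instead the strategy announced after Proposition~\ref{grox}.)
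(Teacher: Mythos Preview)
Your overall strategy --- decouple the four corners and then apply Rost's theorem to each --- is the right one, and it is also the paper's. The place where your argument breaks down is exactly the step you flagged: the propagation-speed estimate.

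The claim that a site at graph distance $d$ from $D_i(0)$ has infection time stochastically bounded below by a $\mathrm{Gamma}(d,1)$ variable is \emph{false} in two dimensions. Along any fixed shortest path the passage time is indeed $\mathrm{Gamma}(d,1)$, but the actual infection time is the \emph{minimum} over all paths of these (dependent) variables, and in $\bbZ^{2}$ there are exponentially many shortest paths; that minimum is strictly smaller, by an amount linear in $d$. More concretely, the dominating Richardson process you invoke has a shape-theorem speed strictly greater than $1$, so no generic infection argument can deliver speed $1+\e$ with $\e$ arbitrarily small. Since your coincidence ball $B\big(P_i,(2-(1+\e)t)L\big)$ must still contain the cap $\mathcal R_i(t)\subset B(P_i,tL)$, you need $t<2/(2+\e)$, and with $\e$ bounded away from $0$ this leaves a fixed gap below $t=1$ that your method cannot close.

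The paper sidesteps this entirely by using the TASEP structure instead of a general propagation bound. It sets
\[
\mathcal K_L:=\{x\in(\bbZ^*)^2\cap[-L,L]^2:\min(|x_1|,|x_2|)=1/2\}
\]
(the cross of sites adjacent to the coordinate axes) and $\tau:=\inf\{t:\exists\,x\in\mathcal K_L,\ \sigma_x(t)=+\}$. For $t\le\tau$ the four corner dynamics \emph{coincide exactly} with $\sigma$ on their respective quadrants, so $\mal(t)=\bigcap_{i}\mal^{i}(t)$ as an equality. Moreover $\tau=\min_i\tau_i$ where, in the particle picture for $\sigma^{1}$ (TASEP from step initial condition), $\tau_1$ is the first time the rightmost particle --- which starts at $0$ and is never blocked --- reaches $L$, or symmetrically the leftmost hole reaches $-L$. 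Its position is thus a rate-$1$ Poisson counting process, and the CLT for sums of exponentials gives $\tau_1\ge L-L^{3/4}$ w.h.p. This yields the sharp threshold $t=1$ directly. The underlying reason your speed-$1$ intuition is nonetheless correct is that discrepancies can only propagate along the one-dimensional $+/-$ interface (a new discrepancy at $x$ requires $\sigma_1$ to have at most one $+$ neighbour there), and on a line the propagation speed is exactly $1$; the paper's argument is precisely the clean way to cash this in.
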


\begin{proof}
Define $\mathcal K_L$ to be the sets of sites in $[-L,L]^2\cap(\bbZ^*)^2$ that are neighboring either the horizontal or vertical axis,

\begin{equation}
       \mathcal K_L:=\{ x\in (\bbZ^*)^2\cap [-L,L]^2 \ | \ \min( |x_1|,|x_2|)=1/2 \}.  
 \end{equation}
Set 
\begin{equation}
 \tau:=\inf\{ t\ge 0 \ | \exists x\in \mathcal K_L, \sigma_x(Lt)=+\}.
\end{equation}
The reader can check that for $t\le \tau$, the evolutions of the four corner do not interact with one another and that 
\begin{equation}\label{weirdos2}
 \mal(t)= \mal^1(t)\cap \mal^2(t)\cap \mal^3(t)\cap\mal^4(t).
\end{equation}
Moreover, as a consequence, one has
\begin{equation}
  \tau:=\inf_{i=1\dots4} \tau_i:=\inf\{ t\ge 0 \ | \  \exists x\in \mathcal K_L, \exists i\in[1,4], \sigma^i_x(Lt)=+\}.
\end{equation}
 
What remains to check is that $\tau_1$ is roughly equal to $L$.
If one uses the particle system description from Section \ref{prtsys},
the dynamic $\sigma_1$ corresponds to TASEP with {\bf step} initial condition:
i.e.\ with the negative half-line full of particles and the positive half-line empty.

\medskip

With this description,  $\tau_1$ is the first time that either
the rightmost particle, which starts from $0$, reaches $L$, or the leftmost empty space (or antiparticle)
hits $-L$. As the jump rate of the particles (and thus of antiparticles) is equal to one, the central limit Theorem for simple random walk gives us that
these time are equal to $L+O(L^{1/2})$ where the second term includes the random normal correction.
Thus one can infer that w.h.p: $$\tau_1\ge L-L^{3/4},$$ so that the same is true for $\tau$.

\medskip

From Theorem \ref{rost} one gets that with high probability for all $t\le 1$,

\begin{equation}
 \mal^1(Lt)\supset \{ x\bff_1+y\bff_2 \ | \ y \ge g(x,t)-2+\delta  \}=A^{-\delta}_1(t).
\end{equation}
so that combined with \eqref{weirdos2} one gets that w.h.p.\ for all $t\le 1-\delta$,
\begin{equation}
 \mal(Lt)\supset A^{-\delta}_1(t)\cap A^{-\delta}_2(t)\cap A^{-\delta}_3(t)\cap A^{-\delta}_4(t)\supset \mathcal D^{(-\delta)}(t).
\end{equation}

\end{proof}

We address now the issue of time larger than one. At this time, the different corners start to interact so that one has
to find a trick to regain independence. For $t\in [1,4]$, set $d(t)=2\sqrt{t}-t$.
The reader can check that for all $t\ge 1$, $\DD(t)$ is inscribed in the square $[-d(t),d(t)]^2$, i.e.\ 
\begin{equation}
 \max_{{\bf x}\in \mathcal D(t)} {\bf x}\cdot \be_1= -\min_{{\bf x}\in \mathcal D(t)} {\bf x}\cdot \be_1=d(t),
\end{equation}
the same being valid for the second coordinate.

\medskip

We will intersect $\mathcal D(t)$ with a smaller square, which gets us a shape with vertical and horizontal edges around the poles.
More precisely, given $\delta$, for $t\le 4(1-\delta)$ (we have in that case $d(t)\ge \delta$ if $\gd$ is small enough) we define the function 
\begin{equation}
 \bar g(x,t):=\begin{cases} g(x,t)-2 \text{ if } |x|\le d(t)-\delta,\\
                           |x|- (d(t)-\delta)+g(d(t)-\delta,t)-2 \text{ if } |x|\ge d(t)-\delta.
              \end{cases}
\end{equation}
It is continuous in $x$, convex in $x$ and has slope equal to $\pm 1$ outside of $[-d(t)+\delta,d(t)-\delta]$.
Define $\bar \DD_1(t)$ to be the epigraph of  $\bar g(x,t)$ in $(0,\bff_1,\bff_2)$ and $\bar \DD_i(t)$ its rotation by an angle $(i-1)\pi/2$, and set
\begin{equation}\label{bddef}
\bar \DD(t)= \bigcap_{i=1}^4 \bar \DD_i(t).
\end{equation}
The reader can check that
\begin{equation}
 \bar \DD(t)=\DD(t)\cap[-r(t),r(t)]^2,
\end{equation}
where $r(t)\in(d(t)-\delta/2,d(t))$ is the unique solution of the equations
\begin{equation}\label{defret}
  \bar g(x,t)=-x.
\end{equation}
To check that $r(t)>d(t)-\delta/2$ it is sufficient to observe that 

\begin{equation}
 \bar g(d(t)-\delta/2,t)=\delta/2+\bar g(d(t)-\delta,t)
= \delta/2-d(t)+\frac{-2 d(t)\delta+\delta^2}{2t}<d(t)-\delta/2.
\end{equation}

\begin{figure}[hlt]
\leavevmode
\epsfxsize =10 cm
\psfragscanon
\psfrag{-dt}{\small $-d(t)$}
\psfrag{-rt}{\small $-r(t)$} 
\psfrag{dt}{\small $d(t)$}
\psfrag{rt}{\small $r(t)$} 
\epsfbox{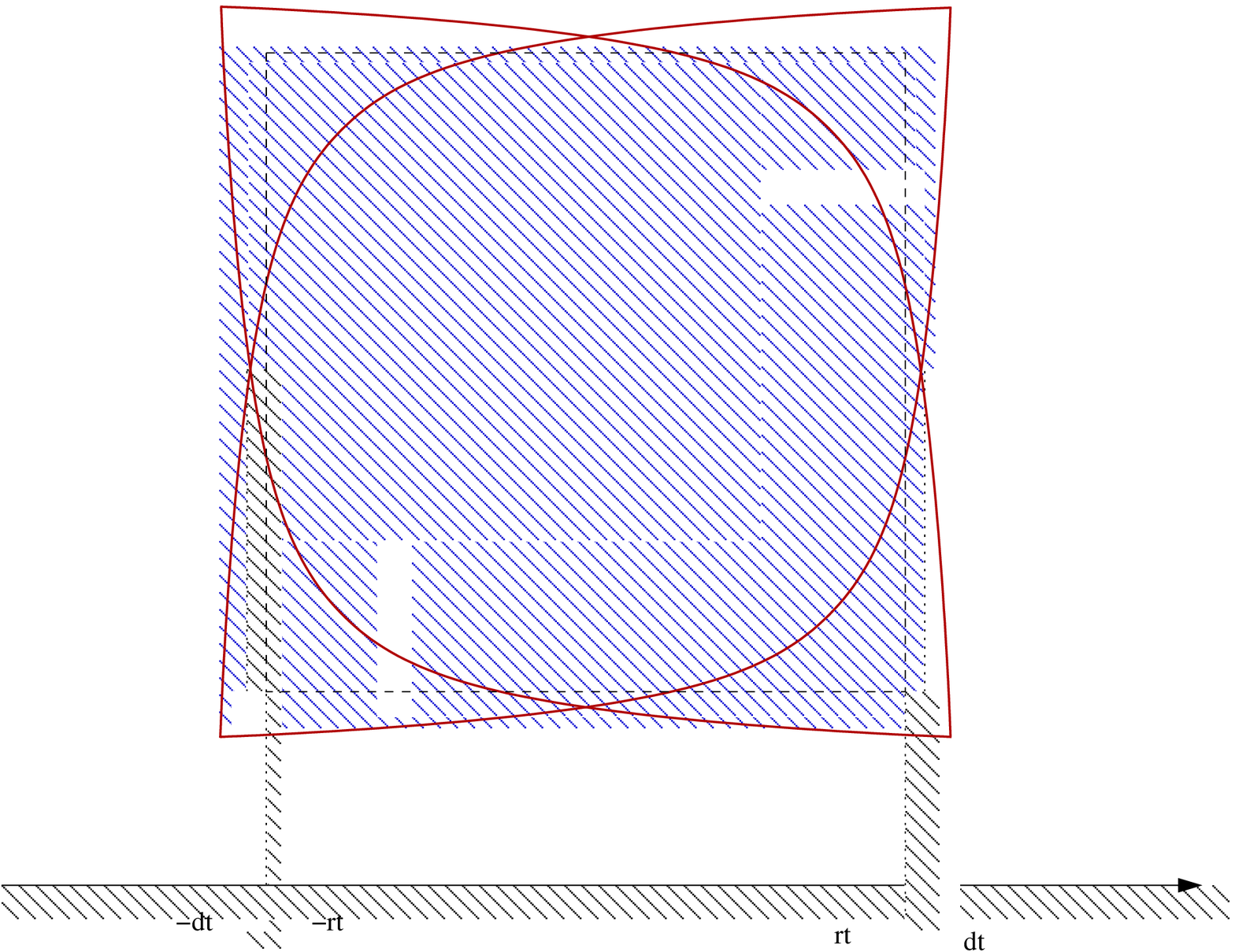}
\begin{center}
  \caption{\label{fig:lafonctiong4} The shape $\DD(t)$ (darkened on the figure) is obtained by intersection four rotated version of the epigraph of the function $x\mapsto g(x,t)-2$ 
(recall \eqref{defj}). For $t\in [1,4]$ (as above) this shape is inscribed in $[-d(t),d(t)]^2$ where $d(t)=2\sqrt{t}-t$.
We define $\bar \DD(t)$ to be the intersection of $\mathcal D(t)$ with  $[-r(t),r(t)]^2$ defined in \eqref{defret}.}
\end{center}
\end{figure}

Hence $\bar \DD(t)$ has flat parts around the pole. Thus, if one starts a dynamics from a shape approximating $\bar \DD(t)$ the four corner will not 
interact instantaneously like in the proof of Lemma \ref{th:smalltime}. We will use this fact to prove the following result

\begin{lemma}\label{th:bigtime}
For any $\gd\le 0$ and $\gep$ small enough one has for all $k\ge 0$ such that $\gep k\le 3-4\delta$  w.h.p, 
\begin{equation}\label{crgr}
\frac{1}{L}\mal( (1-\delta)(1+\gep k)L)\supset \bar \DD(1+\gep k) .
\end{equation}
As a consequence, for all $t\ge 1-\delta$ one has
\begin{equation}\label{thatsmy}
 \frac{1}{L}\mal(Lt)\supset  \DD(t)^{(-4\delta)}.
\end{equation}
\end{lemma}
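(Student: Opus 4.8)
The plan is to prove the iterative bound \eqref{crgr} by induction on $k$, and then to read off \eqref{thatsmy} by a deterministic interpolation. Throughout, $\delta>0$ is small and $\epsilon>0$ is chosen small depending on $\delta$.

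For the inductive step, suppose \eqref{crgr} holds at step $k$, so that w.h.p. the $-$-region at time $(1-\delta)(1+\epsilon k)L$ contains $L\bar{\DD}(s)$, $s:=1+\epsilon k$. By the graphical construction and the monotonicity of Section \ref{grcont} (plus the Markov property), the true $-$-region at time $(1-\delta)(1+\epsilon(k+1))L$ contains the $-$-region, after rescaled time $(1-\delta)\epsilon$, of the dynamics started from a configuration $\bar\zeta$ whose $-$-region is exactly (a lattice approximation of) $L\bar{\DD}(s)$; so it suffices to show that this evolved region contains $L\bar{\DD}(s+\epsilon)$ w.h.p. Now the flat horizontal/vertical segments of $\partial\bar{\DD}(s)$ near the four poles do the work: such a segment is \emph{exactly stable} for the $h=\infty$ dynamics (a $-$-site there has three $-$-neighbours), and by a finite-speed-of-propagation estimate the perturbations born at the four corners do not reach one another within rescaled time $\epsilon$, as long as $\epsilon$ is smaller than a fixed fraction of the length of these segments — a length bounded below by some $c(\delta)>0$ precisely when $\epsilon k\le 3-4\delta$, i.e. $s\le 4(1-\delta)$. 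Hence, near each corner $i$, the evolved region coincides w.h.p. with the $-$-region of the dynamics started from the single rotated epigraph $\bar{\DD}_i(s)$; this last dynamics falls exactly within Theorem \ref{visvis} (with $h=\infty$ and time rescaled by $L$), its initial interface being $g(\cdot,s)-2$ near the corner and linear of slope $\pm1$ outside.

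Theorem \ref{visvis} identifies the rescaled corner interface at time $\tau:=(1-\delta)\epsilon$, up to an arbitrarily small $\delta'$ w.h.p., with $u(x,\tau)=\inf_y\{u_0(y)+g(x-y,\tau)\}$; the semigroup identity $\inf_z\{g(z,s)+g(x-z,\tau)\}=g(x,s+\tau)$ (which is just the statement that $g$ is the fundamental solution of \eqref{viscous}) then gives $u(x,\tau)=g(x,s+\tau)-2$ on the curved region. Reassembling the four corners with the flat parts — which have not moved at all, hence contribute with no error — shows that w.h.p. the evolved region contains $L$ times the set obtained from $\bar{\DD}(s+(1-\delta)\epsilon)$ by pushing only its curved part inward by $\delta'$. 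Since $\bar{\DD}(s+\epsilon)\subset\bar{\DD}(s+(1-\delta)\epsilon)\subset\bar{\DD}(s)$ on the flat parts, and on the curved part $\partial\bar{\DD}$ moves inward with speed $b(\theta)\ge b_{\min}(\delta)>0$ (the pole angles, where $b$ vanishes, having been cut off), choosing $\delta'\ll b_{\min}(\delta)\,\delta\epsilon$ yields $\bar{\DD}(s+\epsilon)\subset(\text{evolved region})/L$ w.h.p. and closes the induction. The base case $k=0$ comes from Lemma \ref{th:smalltime} (equivalently Theorem \ref{rost} applied to the four non-interacting corners), composed if necessary with one run of the corner comparison above to reconcile $\DD(1)$ with its truncation $\bar{\DD}(1)$ near the poles. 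A union bound over the $O(\epsilon^{-1})$ values of $k$ gives \eqref{crgr}.

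To pass to \eqref{thatsmy}: for $t\le 1-\delta$ it is Lemma \ref{th:smalltime}, and for $t\ge 1-\delta$ I would take the smallest $k$ with $(1-\delta)(1+\epsilon k)\ge t$ (if $t$ lies beyond the window where $1+\epsilon k\le 4-4\delta$, then $\DD(t)$ already has diameter $O(\delta)$ so $\DD(t)^{(-4\delta)}=\emptyset$ once the constants are fixed, and there is nothing to prove). Since $t\mapsto\mal(Lt)$ is decreasing, \eqref{crgr} gives $\frac1L\mal(Lt)\supset\frac1L\mal((1-\delta)(1+\epsilon k)L)\supset\bar{\DD}(1+\epsilon k)$ w.h.p., and one is left with the purely geometric inclusion $\bar{\DD}(1+\epsilon k)\supset\DD(t)^{(-4\delta)}$, in which the slack $4\delta$ absorbs the truncation defining $\bar{\DD}$, the discretization step $\epsilon$, and the $O(\delta)$ amount of inward motion of $\partial\DD$ over the time window $[t,1+\epsilon k]$. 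The delicate point of the whole argument is the corner-decoupling step: turning ``the flat buffers make the four corner evolutions independent for a positive macroscopic time'' into a genuine finite-speed statement for the $h=\infty$ dynamics, and checking that the additive errors (the $\delta'$ of Theorem \ref{visvis}, the lattice discretization, the $\bar{\DD}$-versus-$\DD$ gap) stay below the $\delta\epsilon$ time-cushion built into \eqref{crgr}, uniformly over all $O(\epsilon^{-1})$ steps; the remaining ingredients (monotone coupling, the semigroup property of $g$, the extinction geometry) are routine.
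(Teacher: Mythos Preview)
Your proposal is correct and follows essentially the same route as the paper: induction on $k$, monotone restart from $L\bar{\DD}(1+\epsilon k)$ via the Markov property, decoupling of the four corners thanks to the flat buffers, application of Theorem~\ref{visvis} together with the semigroup identity for $g$ at each corner, and then the deduction of \eqref{thatsmy} by monotonicity in $t$ plus the geometric inclusion $\bar{\DD}\supset\DD^{(-\delta)}$. The one place where the paper adds substance beyond your sketch is precisely the step you flag as delicate: it makes the ``finite-speed'' decoupling concrete by defining $\bar\tau$ as the first time a site adjacent to an axis flips, identifying $\bar\tau_1$ via the TASEP correspondence with the hitting time of the rightmost particle (resp.\ leftmost hole) across the buffer of macroscopic length $\ge\delta/2$, and bounding it below by $\epsilon L$ (for $\epsilon\le\delta/4$) via a CLT for sums of exponentials.
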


\begin{proof}
The main part of the job is proving \eqref{crgr}, as \eqref{thatsmy} is deduced from it by monotonicity in $t$ of $\mal(t)$.
We proceed by induction on $k$.
For $k=0$ the result is just a consequence of Lemma \ref{th:smalltime} for $t=1-\delta$.

\medskip

Now we suppose that the result is true for $k$ and and prove it for $k+1$.
If \eqref{crgr} holds for $k$ w.h.p, then, using the graphical construction,
one can couple
$(\sigma((1-\delta)(1+\gep k)+t))_{t\ge 0}$ with $\sigma^k(t)$ a dynamic starting with initial condition
$-$ in $L \bar \DD(1+\gep k)$ and $+$ elsewhere, in such a way that
\begin{equation}
\mal((1-\delta)(1+\gep k)+t)\supset \mal^k(t), \quad \forall t \ge 0,
\end{equation}
whenever 
\begin{equation}\label{tramere}
 \sigma((1-\delta)(1+\gep k))\supset L \bar \DD(1+\gep k),
\end{equation}
 where $\mal^k$ is defined as in \eqref{mal} with  $\sigma$ replaced by $\sigma^k$.
 
 \medskip

As \eqref{tramere} holds w.h.p.\ we can prove \eqref{crgr} for $k+1$ if one proves that  
w.h.p.\
\begin{equation}\label{reptile}
 \mal^k((1-\delta)\gep)\supset L \bar \DD(1+\gep(k+1))
\end{equation}

\medskip

Now, we couple $\sigma^k(t)$ with four interface dynamics  $\sigma^{(i,k)}(t)$ starting with initial condition
 with initial condition $-$ in $L\bar \DD_i(1+\gep k)$ and $+$ elsewhere using the same clock process.

\medskip

As we remarked in the proof of Lemma \ref{th:smalltime}, one has
\begin{equation}
 \mal^k(t)\subset \bigcap_{i=1}^4 \mal^{(i,k)}(t),
\end{equation}
and this inclusion is an equality up to the first time a site near the axes changes spin.
More precisely set 
\begin{equation}
     \bar { \mathcal K}_L:=\{ x\in (\bbZ^*)^2\cap [-L r(t),Lr(t)]^2 \ | \ \min( |x_1|,|x_2|)=1/2 \}=\mathcal K_L \cap L\bar \DD(1+\gep k),  
 \end{equation}
and 
 \begin{equation}
 \bar \tau:=\inf\{ t\ge 0 \ | \ \exists x\in \bar{\mathcal K}_L, \sigma^k_x(t)=+\}.
\end{equation}
One has from the definition of our dynamics that
\begin{equation}\label{egalite}
 \mal^k(t)=\bigcap_{i=1}^4 \mal^{(i,k)}(t), \quad \forall t \le \bar \tau.
\end{equation}

\medskip

Our first task is to show that w.h.p.\ $\bar \tau\ge \gep$ in order to be allowed to use \eqref{egalite}.
We check that 
 \begin{equation}
  \bar \tau_1:= \{t\ge 0 \ |  \ \exists x\in \bar { \mathcal K}_L, \sigma^{(1,k)}_x(Lt)=-\}\ge \gep,
 \end{equation}
which by symmetry is sufficient. Indeed 

$$\bar \tau=\min_{i\in [1,4]}=\bar \tau_i$$ 

where the $\bar \tau_i$ are defined similarly to $\bar \tau_1$.

\medskip

We use again the correspondence with particle system of Section \ref{prtsys} to do that:
the dynamics $\sigma^{(1,k)}$ corresponds to a particle system with an initial condition where 
the right-most particle is located at $L(d(1+\gep k)-\delta)$ and the left-most empty-space at $-L(d(1+\gep k)-\delta)$.

\medskip

The time $\bar \tau_1$ is the first time where either the rightmost particle hits $Lr(1+\gep k)$
or the leftmost antiparticle hits $-Lr(1+\gep k)$.
Using central limit 
Theorem for the sums of exponential variables one has
w.h.p,

\begin{equation}
 \bar \tau_1\ge L((r-d)(1+\gep k)+\delta)+L^{3/4}.
\end{equation}
As $r(t)\ge d(t)-\delta/2$, one has $\bar \tau_1\ge \delta/4 L\ge \gep L$ w.h.p.\
provided that $\gep\le \delta/4$.

\medskip

Now, having shown that equation \eqref{egalite} holds for $t=\gep$ w.h.p., to get \eqref{reptile} it is sufficient to prove that 
w.h.p. (recall the definition of $\bar \DD(t)$ in \eqref{bddef}) 
\begin{equation}\label{crocodile}
  \mal^{(1,k)}((1-\delta)\gep L)\supset \bar \DD_1(1+(k+1)\gep).
\end{equation}
As the two sets are epigraphs of function in $(0,\bff_1, \bff_2)$ it is sufficient to prove an inequality between function.
Let $\eta^1(x,t)$ denote the interface function corresponding to $\sigma^{(1,k)}$.
Theorem \ref{visvis} give us the scaling limit for the evolution of $\eta^1(x,t)$, which is given by $u(x,t)$ the 
solution of \eqref{viscous} with initial condition

\begin{equation}
 u_0(x):= \bar g(x,1+\gep k).
\end{equation}
Thus \eqref{crocodile} is proved if we can show that

\begin{equation}\label{inegneg}
 u(x,(1-\delta)\gep) < \bar g(x,1+\gep(k+1)),\quad \forall x \in \bbR.
\end{equation}
The reader can readily check, using \eqref{comendir} that 
\begin{multline}\label{inefg}
  u(x,(1-\delta)\gep)=g(x,1+\gep(k+1)-\delta\gep)-2<  \bar g(x,1+\gep(k+1)),\\ \forall x\in [-d(1+\gep k)+\delta,d(1+\gep k)-\delta].
\end{multline}
What remains to show is that the inequality is also valid outside of the interval $[-d(1+\gep k)+\delta,d(1+\gep k)-\delta]$.
Set $x>d(1+\gep k)-\delta$ (by symmetry of the functions it is sufficient to check this case), as $u(\cdot,t)$ is  $1$-Lipshitz for all $t$

\begin{multline}
 u(x,(1-\delta)\gep)\le  u(d(1+\gep k)-\delta,(1-\delta)\gep)+(x-d(1+\gep k)+\delta)\\
 <\bar g(d(1+\gep k)-\delta,1+\gep(k+1))+(x-d(1+\gep k)+\delta)=\bar g(x,1+\gep(k+1)),
 \end{multline}
where the last inequality comes from the definition of  $\bar g$.
\medskip

Outside of this interval the inequality \eqref{inegneg} is also valid: indeed in that region $u(x,(1-\delta)\gep)$ is $1$-Lipshitz in $x$ and 
$\bar g(x,1+\gep(k+1))$ has slope $+ 1$ and $-1$ on the right resp.\ left of this interval \eqref{inefg}.
This concludes the proof of \eqref{crocodile} and thus of \eqref{crgr}.
\medskip

We can now detail the proof of \eqref{thatsmy}.
For $t\ge 4-4\delta$ we note that that $\DD^{(-4\delta)}(t)=\emptyset$ so that the inclusion is trivial.

\medskip
For $t\in (1-\delta, 4-4\delta]$, note that,  with high probability, \eqref{crgr} holds for all $k$ with $\gep k\le 3-4\delta$. 
Let $k_t$ be the smallest integer such that
$(1-\delta)(1+\gep k)\ge t$. As $\mal(t)$ deacreases in time, from \eqref{crgr}, for all $t \in (1-\delta, 4-4\delta]$ one has w.h.p.\
\begin{equation}\label{sicko}
 \frac{1}{L}\mal(Lt)\supset \frac{1}{L}\mal((1-\delta)(1+\gep k_t))\supset \bar \DD(1+\gep k_t).
\end{equation}
Then we note that $\bar \DD(t)\supset \DD^{(-\delta)}(t)$ and that 
\begin{equation}
 1+\gep k_t\le \frac{t}{1-\delta}+\gep,
\end{equation}
so that \eqref{sicko} implies
\begin{equation}
 \frac{1}{L}\mal(Lt)\supset \DD^{(-\delta)}\left(\frac{t}{1-\delta}+\gep\right)\supset \DD^{(-4\delta)},
\end{equation}
if $\gep$ and $\delta$ are chosen small enough.

\end{proof}

Proposition \ref{grox} is just the concatenation of Lemma \ref{th:smalltime}
with Lemma \ref{th:bigtime}.

\bigskip

{\bf Acknowledgments:} The author would like to thank the organizers of the 2012 PASI conference, 
where he had stimulating discussion with other participants, Milton Jara, for valuable bibliographic help concerning
TASEP, and François Simenhaus and Fabio Toninelli for numerous enlightening discussion on the subject. This work was partially written during the authors stay in Instituto de Matematica Pura e Applicada, 
he acknowledges the kind hospitality and the support of CNPq.

\end{document}